\newtheorem*{thm}{Theorem}
\newtheorem{lemma}{Lemma}
\newtheorem{corollary}{Corollary}
\newtheorem{proposition}{Proposition}
\title{Dispersion dynamics for the defocusing generalized Korteweg-de Vries equation}
\author{Stefan Steinerberger}
\address{Mathematisches Institut, Universit\"at Bonn, Endenicher Allee 60, 53115 Bonn, Germany}
\begin{document}
\begin{abstract}
We study dispersion for the defocusing gKdV equation.
It is expected that it is not possible for the bulk of the $L^2-$mass to concentrate in a small interval for a long time.
We study a variance-type functional exploiting Tao's monotonicity formula in the spirit of earlier work by Tao as well as Kwon \& Shao
and quantify its growth in terms of sublevel estimates.
\end{abstract}

\maketitle

\section{Introduction}
We are interested in the dispersion properties of the defocusing generalized Korteweg de Vries (gKdV) equation. The gKdV equation
is given by
$$ \partial_t u + \partial_{xxx}u = \mu\partial_x(|u|^{p-1}u),$$
where $p>1$, $\mu$ is real ($\mu = 1$ corresponds to defocusing) and $u$ is real-valued. There are conservation laws for mass and energy
\begin{align*} M(u) &= \int_{\mathbb{R}}{u(t,x)^2dx}\\
E(u) &= \int_{\mathbb{R}}{\frac{1}{2}u_x(t,x)^2 + \frac{\mu}{p+1}|u(t,x)|^{p+1}dx}
\end{align*}
as well as a scaling symmetry
$$ u(t,x) \rightarrow \lambda^{-\frac{2}{p-1}}u\left(\frac{t}{\lambda^3}, \frac{x}{\lambda}\right).$$
The focusing case $\mu = 1$ has been intensively studied: we refer to the influential work of Martel \& Merle; in particular, their Liouville-type
theorem \cite{mm} states that (under some conditions) a perturbation of the soliton (away from the soliton manifold) cannot be $L^2-$compact.
Based on the extensive theory developed for the focusing gKdV, there are also results by de Bouard \& Martel \cite{deb} and Laurent \& Martel \cite{lau}.\\

The defocusing case $\mu = +1$ does not only \textit{not} have solitons, it is also expected to not have 'pseudosolitons' (solutions 
whose $L^2-$mass exhibits spatial concentration over time) in a fairly general sense. This has been fully resolved in the mass-critical 
case $p=5$ by Dodson \cite{dob2} (building on earlier work by Killip, Kwon, Shao \& Visan \cite{ki} and an older result \cite{dob} of his). 
We emphasize that we are interested in the long-time dynamical behavior of smooth initial data $u_0 \in \mathcal{S}(\mathbb{R})$: the defocusing 
nature of the equation guarantees global well-posedness, including uniform bounds on $\|u\|_{L^{\infty}_{t,x}}$ and $\|u\|_{L^{\infty}_t H^1_x}$
and regularity never becomes an issue. A similar problem occurs for the defocusing, one-dimensional nonlinear wave equation
$$ -u_{tt} + u_{xx} = |u|^{p-1}u \qquad \mbox{on}~\mathbb{R}$$
with $\|u(0)\|_{H^1(\mathbb{R})} + \|\partial_t u(0)\|_{L^2(\mathbb{R})} < \infty$. 
Here, again, $\|u\|_{L^{\infty}_{t,x}}$ bounds are easy and the question is whether there is an actual decay of $\|u(t)\|_{L^{\infty}_x}$.
In a certain averaged sense, this was proven by Lindblad \& Tao \cite{lind}.\\

Using weighted integrals of the conservation laws (an idea dating back at least to Friedrichs and famously used in the work of Morawetz \cite{mor}),
the first result for the defocusing gKdV is due to Tao \cite{tao}. He introduced the normalized centers of mass and energy via
\begin{align*}
 \left\langle x \right\rangle_M := \frac{1}{M}\int_{\mathbb{R}}{xu^2dx} \quad \mbox{and} \quad \left\langle x \right\rangle_E := \frac{1}{E}\int_{\mathbb{R}}{x\left(\frac{1}{2}u_x^2 + \frac{1}{p+1}|u|^{p+1}\right)dx},
\end{align*}
where $M$ and $E$ denote mass and energy of the solution, respectively. If we assume dynamics from the linear part to
play the dominant role in the nonlinear behavior, then high frequencies of the solutions should move to $-\infty$ much quicker than slow frequencies.
However, since energy is more weighted toward high frequencies, we could hope for some connection between the centers of mass
and energy. 

\begin{thm}[Tao] Let $p \geq \sqrt{3}$ and let $u$ be a global-in-time Schwartz solution. Then
 $$ \partial_{t} \left\langle x \right\rangle_E <  \partial_{t} \left\langle x \right\rangle_M.$$
\end{thm}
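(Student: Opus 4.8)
The plan is to reduce the stated strict inequality to a purely static functional inequality and then to establish the latter by a refined Cauchy--Schwarz argument. Since $M$ and $E$ are conserved, $\partial_t\langle x\rangle_M$ and $\partial_t\langle x\rangle_E$ equal $1/M$ and $1/E$ times the time derivatives of the weighted integrals $\int xu^2\,dx$ and $\int x(\frac12 u_x^2+\frac1{p+1}|u|^{p+1})\,dx$. First I would compute these derivatives for a Schwartz solution by substituting $u_t=-u_{xxx}+\partial_x(|u|^{p-1}u)$ and integrating by parts; the weight $x$ produces the surviving terms and all boundary contributions vanish. Writing $M=\int u^2$, $D=\int u_x^2$, $A=\int u_{xx}^2$, $F=\int|u|^{p+1}$, $B=\int|u|^{p-1}u_x^2$ and $C=\int|u|^{2p}$, the first computation gives $\tfrac{d}{dt}\int xu^2\,dx=-3D-\tfrac{2p}{p+1}F$, while the second --- which is exactly Tao's monotonicity formula, obtained by writing $\partial_t(\tfrac12u_x^2+\tfrac1{p+1}|u|^{p+1})$ as a perfect $x$-derivative and integrating the resulting flux --- gives $\tfrac{d}{dt}\int x(\cdots)\,dx=-\tfrac32A-2pB-\tfrac12C$. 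The claim $\partial_t\langle x\rangle_E<\partial_t\langle x\rangle_M$ is then equivalent to
\[
\Phi:=M\Big(\tfrac32A+2pB+\tfrac12C\Big)-E\Big(3D+\tfrac{2p}{p+1}F\Big)>0,
\]
which, using $E=\tfrac12D+\tfrac1{p+1}F$, expands to $\tfrac32MA+2pMB+\tfrac12MC>\tfrac32D^2+\tfrac{p+3}{p+1}DF+\tfrac{2p}{(p+1)^2}F^2$.

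Next I would attack this inequality term by term. The two diagonal comparisons are immediate from Cauchy--Schwarz: $D^2=(\int u\,(-u_{xx}))^2\le MA$ and $F^2=(\int|u|^p\,|u|)^2\le MC$, and $\tfrac{2p}{(p+1)^2}\le\tfrac12$ because $(p-1)^2\ge0$, so the $C$/$F^2$ terms leave a nonnegative surplus $\tfrac{(p-1)^2}{2(p+1)^2}F^2$. The genuinely difficult term is the cross term $\tfrac{p+3}{p+1}DF$: a bound of the shape $DF\le c\,MB$ is false (one can make $DF/MB$ arbitrarily large by superimposing a small-amplitude, high-frequency oscillation on the tail of a fixed bump, since $B$ weights $u_x^2$ by $|u|^{p-1}$ and barely sees such a wiggle while $D$ does). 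The key idea is therefore to control $DF$ \emph{jointly} with the diagonal defects. I would view $u$, $u_{xx}$ and $f(u):=|u|^{p-1}u$ as vectors in $L^2(\mathbb{R})$, project $u_{xx}$ and $f(u)$ onto $u$, and apply Cauchy--Schwarz to the orthogonal complements; using $\int uu_{xx}=-D$, $\int uf(u)=F$ and $\int u_{xx}f(u)=-\int f'(u)u_x^2=-pB$ this yields the sharp relation
\[
\big(DF-pMB\big)^2\le\big(MA-D^2\big)\big(MC-F^2\big).
\]

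Finally I would assemble the pieces. Setting $X=MA-D^2\ge0$ and $Y=MC-F^2\ge0$, the relation gives $DF\le pMB+\sqrt{XY}$, hence $\tfrac{p+3}{p+1}DF\le\tfrac{p(p+3)}{p+1}MB+\tfrac{p+3}{p+1}\sqrt{XY}$. Substituting into $\Phi$ and collecting the $MB$ contributions, whose total coefficient $2p-\tfrac{p(p+3)}{p+1}=\tfrac{p(p-1)}{p+1}$ is nonnegative, together with the surplus $F^2$ term, reduces everything to showing $\tfrac32X+\tfrac12Y\ge\tfrac{p+3}{p+1}\sqrt{XY}$. By AM--GM, $\tfrac32X+\tfrac12Y\ge 2\sqrt{\tfrac34}\,\sqrt{XY}=\sqrt3\,\sqrt{XY}$, so this holds as soon as $\sqrt3\ge\tfrac{p+3}{p+1}$, which rearranges precisely to $p\ge\sqrt3$. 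Since $X,Y>0$ and $B>0$ for $u\not\equiv0$ (the defects vanish only when $u_{xx}\propto u$ and $|u|^{p-1}$ is constant, impossible for a nonzero Schwartz function), the inequality is strict and $\Phi>0$ follows.

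I expect the main obstacle to be exactly the cross term $DF$ and the realization that it cannot be absorbed by a single pairwise Cauchy--Schwarz; the correct device is the orthogonal-complement (Gram-determinant) inequality above, and it is the interplay of its constant with the coefficients $\tfrac32$ and $\tfrac12$ --- through the AM--GM constant $\sqrt3$ --- that produces the threshold $p\ge\sqrt3$. A secondary technical point I would address is the limited regularity of $f(u)=|u|^{p-1}u$ near the zeros of $u$ when $p<2$, which renders the intermediate flux computation (involving $f''$) only formal; as in Tao's work this is handled by regularizing the nonlinearity and passing to the limit, the final identities involving only the well-defined quantities $A,B,C,D,F$.
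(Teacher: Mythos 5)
Your proposal is correct and takes essentially the same route as Tao's proof as sketched in the paper: your quantities $D,F,A,C,pB$ are exactly Tao's $aqM$, $brM$, $a^2M$, $b^2M$, $absM$, and your Gram-determinant inequality $(DF-pMB)^2 \le (MA-D^2)(MC-F^2)$ is precisely the constraint $1-q^2-r^2-s^2+2qrs \ge 0$ used there. Your concluding AM--GM step, with the constant $\sqrt{3}$ matching $\tfrac{p+3}{p+1}$ exactly at $p=\sqrt{3}$, reproduces Tao's algebraic positivity argument, including the stronger form with the surplus term $\tfrac{(p-1)^2}{2(p+1)^2}F^2$ that the paper's Lemma 1 later exploits.
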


The condition $p \geq \sqrt{3}$ is required in the proof but might be an artifact. The statement itself (and all
subsequent statements based on it) are formal in nature, however, it is expected that they all remain true for rougher
solutions. The monotonicity implies the following dispersion statement.

\begin{thm}[Tao] Let $p \geq \sqrt{3}$ and let $u$ be a global-in-time Schwartz solution. Then, for any function $x:\mathbb{R} \rightarrow \mathbb{R}$,
$$ \sup_{t \in \mathbb{R}} \int_{\mathbb{R}}{|x-x(t)|\left(u(t,x)^2 + \frac{u_x(t,x)^2}{2} + \frac{|u(t,x)|^{p+1}}{p+1}\right)dx} = \infty.$$
\end{thm}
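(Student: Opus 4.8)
The plan is to argue by contradiction, turning boundedness of the weighted integral into an assertion about the difference of centers $\langle x\rangle_M - \langle x\rangle_E$, which Tao's monotonicity result (the first Theorem) controls. Write $I(t)$ for the weighted integral in the statement and set $e = \tfrac12 u_x^2 + \tfrac{1}{p+1}|u|^{p+1}$, so that $\int_{\mathbb R} e\,dx = E$ and $\int_{\mathbb R} u^2\,dx = M$. The first reduction is elementary: since $\tfrac1M\int u^2 = \tfrac1E\int e = 1$, the constant shift by $x(t)$ cancels, giving
\[
\langle x\rangle_M - \langle x\rangle_E = \frac1M\int_{\mathbb R}(x-x(t))\,u^2\,dx - \frac1E\int_{\mathbb R}(x-x(t))\,e\,dx
\]
for every choice of $x(t)$. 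Since $u^2 \le u^2+e$ and $e \le u^2+e$ pointwise, bounding each term yields
\[
|\langle x\rangle_M - \langle x\rangle_E| \le \Big(\tfrac1M + \tfrac1E\Big)\,I(t).
\]
Hence it suffices to prove that $\langle x\rangle_M - \langle x\rangle_E$ is \emph{unbounded} in $t$.

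Suppose, for contradiction, that $\sup_t I(t) = C < \infty$. By the displayed bound the quantity $D(t) := \langle x\rangle_M - \langle x\rangle_E$ is bounded, while by the first Theorem it is strictly increasing. A bounded monotone function has finite total variation, so its derivative $F(t) := \partial_t D(t) > 0$ satisfies
\[
\int_{\mathbb R} F(t)\,dt = \lim_{t\to+\infty}D(t) - \lim_{t\to-\infty}D(t) \le 2C\Big(\tfrac1M+\tfrac1E\Big) < \infty.
\]
In particular there is a sequence $t_n \to +\infty$ along which $F(t_n)\to 0$.

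The crux, and the step I expect to be the main obstacle, is to contradict this by showing that the concentration hypothesis forces a uniform lower bound $F(t) \ge c(M,E,p,C) > 0$. The heuristic is an uncertainty principle: $F$ measures the rate at which the energy density separates from the mass density, which in the linear regime is $3$ times the energy-weighted variance of the frequency distribution. This variance is small exactly when $u$ is narrow-band in frequency, which forces $u$ to be spatially spread out and $I(t)$ to be large; the hypothesis $I(t) \le C$ is precisely the opposite regime. Quantitatively, Chebyshev's inequality gives $\int_{|x-x(t)|>R} u^2\,dx \le C/R$, so a definite fraction of the mass lives in a fixed interval, and combining this sublevel-type estimate with the conserved values of $M$ and $E$ should bound $F$ from below. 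Making this rigorous requires Tao's explicit formula for $F$ together with the variance/sublevel estimates developed for the concentrated profile; granting it, $F(t_n)\ge c>0$ contradicts $F(t_n)\to 0$, and therefore $\sup_t I(t) = \infty$.

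As a variant that leans more directly on the \emph{strictness} in the first Theorem, one could instead use $I(t)\le C$ to obtain tightness of $\{u(t,\cdot+x(t))\}_t$ in physical space (from $\int_{|x-x(t)|>R}u^2 \le C/R$) and in frequency (from $\int u_x^2 \le 2E$), extract an $L^2$ limit $\phi \ne 0$ of $u(t_n,\cdot+x(t_n))$, and conclude $F[\phi]=0$, contradicting the strict inequality $\partial_t\langle x\rangle_E < \partial_t\langle x\rangle_M$ applied to the solution emanating from $\phi$. The obstacle there is genuine strong $H^1$ compactness, needed to pass the $u_x^2$ terms in $F$ to the limit, which is not supplied by tightness alone; this is why I would favour the quantitative route above.
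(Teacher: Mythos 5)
Your reduction is correct and matches the standard route: if the weighted integral were bounded by $C$ for some choice of $x(\cdot)$, then both centers stay within $C/M$ resp.\ $C/E$ of $x(t)$, so $D(t)=\langle x\rangle_M-\langle x\rangle_E$ is bounded, while Tao's monotonicity makes it increasing. But the decisive step --- a uniform lower bound $F(t)=\partial_t D(t)\geq c>0$ under the concentration hypothesis --- is exactly what you do not prove, and you say so yourself (``granting it''). This is a genuine gap, not a routine verification: strict positivity of Tao's quadratic form does \emph{not} self-improve to a uniform bound, since the form degenerates as $q\to 1$, $b\to 0$ (all energy in $\|u_x\|_{L^2}^2$, vanishing $L^{p+1}$ norm), so conservation of $M$ and $E$ alone is insufficient; one must genuinely use the hypothesis $I(t)\leq C$. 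Note also that the paper itself does not prove this theorem --- it is quoted from Tao --- but the two ingredients needed to close your gap are precisely what the paper isolates: Lemma 1 (the refined monotonicity, which the author notes is implicit in Tao's proof) and the concentration--H\"older argument of Section 3.2.

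Concretely: Chebyshev, as you observe, gives $\int_{|x-x(t)|>R}u^2\,dx\leq C/R$; taking $R=2C/M$, the interval $J=[x(t)-R,\,x(t)+R]$ of length $4C/M$ carries mass at least $M/2$, and H\"older yields
\[
\frac{M}{2}\leq\left(\int_{J}|u|^{p+1}dx\right)^{\frac{2}{p+1}}|J|^{\frac{p-1}{p+1}},
\qquad\text{hence}\qquad
\int_{\mathbb{R}}|u|^{p+1}dx\gtrsim M^{p}\,C^{-\frac{p-1}{2}}
\]
uniformly in $t$. Feeding this into Lemma 1,
\[
F(t)\gtrsim_{p}\frac{1}{EM^{3}}\left(\int_{\mathbb{R}}|u|^{p+1}dx\right)^{2}\gtrsim_{p}\frac{M^{2p-3}}{E}\,C^{-(p-1)}=:c_{0}>0,
\]
so $D(t)\geq D(0)+c_{0}t\to\infty$, contradicting $|D(t)|\leq C(1/M+1/E)$. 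With this uniform bound your intermediate step (extracting $t_n$ with $F(t_n)\to 0$ from finite total variation) is superfluous --- boundedness of $D$ is contradicted directly. Your compactness variant is rightly discarded: beyond the lack of strong $H^1$ compactness that you flag, the limit profile need not be Schwartz, so the strict monotonicity theorem would not even apply to the solution emanating from it. In short, the architecture of your proof is the correct one, but the quantitative heart of the argument is asserted rather than proved, and it is the nontrivial part.
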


This has no implications on spatial decay of the solution because some energy might move far away very quickly. Kwon \& Shao \cite{ks} noticed
that the monotonicity formula could be employed in another way to exclude some type of spatial decay.

\begin{thm}[Kwon \& Shao] Let $p \geq \sqrt{3}$ and let $u$ be a global-in-time Schwartz solution. Then, for any function $x:\mathbb{R} \rightarrow \mathbb{R}$,
 $$ \sup_{t \in \mathbb{R}} \int_{\mathbb{R}}{(x-x(t))^2u(t,x)^2dx} = \infty.$$
\end{thm}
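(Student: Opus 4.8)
The plan is to argue by contradiction: a uniformly bounded second moment is incompatible with Tao's monotonicity because it forces the variance itself to grow linearly. First I would remove the freedom in $x(t)$. For fixed $t$ the quantity $a\mapsto\int(x-a)^2u^2\,dx$ is minimized at $a=\langle x\rangle_M(t)$, so a uniform bound for \emph{some} $x(t)$ yields a uniform bound for the intrinsic variance $V(t):=\int(x-\langle x\rangle_M)^2u^2\,dx$. Thus I may assume $\sup_tV(t)\le C<\infty$ and seek a contradiction.

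The first ingredient is that this excludes dispersion. By the Heisenberg uncertainty principle $V(t)\int u_x^2\,dx\ge\tfrac14M^2$, so $\int u_x^2\,dx\ge M^2/(4C)$ for all $t$. The second, decisive, ingredient is an exact evolution law for $V$. Differentiating and integrating by parts using $u_t=-u_{xxx}+\partial_x(|u|^{p-1}u)$ gives $\partial_t\int xu^2\,dx=-3\int u_x^2-\tfrac{2p}{p+1}\int|u|^{p+1}$ and $\partial_t\int x^2u^2\,dx=-6\int xu_x^2-\tfrac{4p}{p+1}\int x|u|^{p+1}$. Writing $a=\langle x\rangle_M$, $G:=\langle x\rangle_M-\langle x\rangle_E$ and $B:=\int(a-x)|u|^{p+1}\,dx$, and eliminating the kinetic term via $EG=\tfrac12\int(a-x)u_x^2+\tfrac1{p+1}B$, these combine into the clean identity
$$ \frac{d}{dt}V(t)=12\,E\,G(t)+\frac{4(p-3)}{p+1}\,B(t). $$
Here $|B(t)|\le\|u\|_\infty^{p-1}\sqrt{MV}\le\|u\|_\infty^{p-1}\sqrt{MC}$ is uniformly bounded, using the variance bound and the uniform $L^\infty$ control.

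Now I would feed in Tao's Theorem, which makes $G$ strictly increasing. Set the band width $\theta:=\tfrac{|p-3|}{3E(p+1)}\|u\|_\infty^{p-1}\sqrt{MC}$, so that the error term is dominated whenever $|G|>\theta$. If $G(t_0)>\theta$ for some $t_0$, then $\frac{d}{dt}V\ge12E(G(t_0)-\theta)>0$ for all $t\ge t_0$ by monotonicity, so $V\to+\infty$; symmetrically, if $G(t_1)<-\theta$ then running $t\to-\infty$ makes $\frac{d}{dt}V$ negative on a half-line, forcing $V(-T)\to+\infty$. Either way $\sup_tV=\infty$, contradicting the hypothesis. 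In particular, at the exactly balanced exponent $p=3$ the error term vanishes identically, $\frac{d}{dt}V=12EG$, and since a strictly increasing $G$ cannot stay pinned at $0$ the argument closes with no further input.

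The main obstacle is the residual regime, present only for $p\ne3$, in which $G(t)$ remains trapped in the bounded band $[-\theta,\theta]$ for all time. A strictly increasing function confined to a bounded interval merely converges, so soft monotonicity does not suffice; I must upgrade Tao's strict inequality to a quantitative rate $\partial_tG\ge c>0$, using precisely the non-dispersion bound $\int u_x^2\ge M^2/(4C)$ to prevent the solution from degenerating. Establishing such a lower bound — equivalently, controlling the sublevel set on which the monotonicity defect is small — is the crux, and is exactly where the structural hypothesis $p\ge\sqrt3$ must be exploited.
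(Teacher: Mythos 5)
Your skeleton is sound and is essentially the Kwon--Shao route the paper sketches: the reduction to the intrinsic variance $V(t)=\int(x-\langle x\rangle_M)^2u^2\,dx$ is exactly the paper's ``key observation,'' and your virial identity $\frac{d}{dt}V=12EG+\frac{4(p-3)}{p+1}B$ is correct (after $I(u)=2MV$ it agrees with the identity $(\diamondsuit)$ of Section 3.2 up to an overall constant), as are the error bound $|B|\le\|u\|_\infty^{p-1}\sqrt{MV}$ and the two-sided band argument, forward and backward in time, including the clean closure at $p=3$. But for $p\neq 3$ the proof does not close: in the trapped regime $G(t)\in[-\theta,\theta]$ you concede that strict monotonicity of $G$ only yields convergence and that a quantitative rate $\partial_tG\ge c>0$ is required, and you do not prove it. That is not a technicality --- it is the entire content beyond Tao's theorem. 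Moreover, your proposed ingredient for producing the rate, the uncertainty-principle bound $\int u_x^2\ge M^2/(4C)$, is the wrong one: in Tao's positivity argument the quantitative defect of the quadratic form is the term $\bigl(\tfrac12-\tfrac{2p}{(p+1)^2}\bigr)b^2r^2\sim M^{-2}\bigl(\int|u|^{p+1}dx\bigr)^2$, so one needs a lower bound on the \emph{potential} term $\int|u|^{p+1}dx$; a lower bound on the kinetic energy $aqM=\int u_x^2\,dx$ says nothing about $b$ or $r$ and hence gives no lower bound on the defect.

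The gap is closed by precisely the two ingredients the paper supplies. First, Lemma 1 (refined monotonicity, extracted from Tao's proof): $\partial_tG\gtrsim_p (EM^3)^{-1}\bigl(\int|u|^{p+1}dx\bigr)^2$. Second, the concentration argument of Section 3.2: if $V\le C$, the interval $J$ carrying the middle half of the mass satisfies $|J|\lesssim\sqrt{I(u(t))}/M\lesssim\sqrt{C/M}$, and H\"older, $\tfrac{M}{2}\le\bigl(\int_J|u|^{p+1}dx\bigr)^{\frac{2}{p+1}}|J|^{\frac{p-1}{p+1}}$, yields $\int|u|^{p+1}dx\ge c_0>0$ uniformly in $t$. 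Combining the two gives $\partial_tG\ge c>0$, so $G$ exits your band in finite time (indeed $G(t)\gtrsim t$, whence $\dot V\gtrsim t$ and $V\gtrsim t^2$), and your own case analysis then produces the contradiction. Note also that your attribution of the hypothesis $p\ge\sqrt3$ to this sublevel step is off: the concentration/H\"older argument works for every $p>1$, while $p\ge\sqrt3$ is what makes Tao's quadratic form (and its refined defect) positive in the first place.
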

The key observation is the simple fact that
$$ \int_{\mathbb{R}}{(x-x(t))^2u(t,x)^2dx} \geq \int_{\mathbb{R}}{(x-\left\langle x \right\rangle_M)^2u(t,x)^2dx},$$
where the right hand side allows for an explicit computation. In particular, if a solution satisfies
$$ |u(t,x)| \lesssim \frac{1}{|x-x(t)|^{3/2+\varepsilon}}$$
for some $x(t): \mathbb{R} \rightarrow \mathbb{R}$, then $u \equiv 0.$ Inspired by these results, we study the interaction functional 
$I: L^2(\mathbb{R}) \times L^2(\mathbb{R}) \rightarrow \mathbb{R} \cup \left\{\infty\right\}$ given by
$$ I(f) := \int_{\mathbb{R}}{\int_{\mathbb{R}}{f(x)^2(x-y)^2f(y)^2dx}dy}.$$
Formal calculations with a translation invariant interaction term $\eta(x-y)$ yields complicated expressions of the form $\eta'''$,
which naturally suggests $\eta(x-y) = (x-y)^2$. Interaction estimates of this type have been used very effectively for NLS: we refer to
the work of Colliander \& Grillakis \& Tzirakis \cite{col} who use a weight that is essentially quadratic for small distances
and results by Planchon \& Vega \cite{pla} (where a similar computation is mentioned but they ultimately manage to use a lower 
order expression $\eta(x-y) = |(x-y)\cdot \omega|$, where $\omega$ is a fixed unit vector). Quadratic Morawetz estimates for NLS
and Hartree equations are also studied by Ginibre \& Velo \cite{gv}.\\

Our functional incorporates structure coming from Tao's theorem as follows: for any solution $u(t)$ of the gKdV, the 
variation of the functional in time can be written as
\begin{align*}
   \partial_t I(u(t)) = 12 E M\left(\left\langle x \right\rangle_M  -  \left\langle x \right\rangle_E \right) + \frac{4p-12}{p+1}\left(\left\langle x \right\rangle_M M \int_{\mathbb{R}}{|u|^{p+1}dx} - M \int_{\mathbb{R}}{|u|^{p+1}x dx}\right).
\end{align*} 
So far, this is very much in spirit of the earlier results by Tao and Kwon \& Shao and unboundedness of the functional follows from their approach. 
A key novelty in our approach is a slight refinement of Tao's monotonicity which yields additional control on $\partial_t I(u(t))$ in terms
of mass, energy and $I(u(t))$, which allows for bootstrapping and the derivation of some additional information such as sublevel estimates
on $I(u(t))$.

\section{Statement of Results}

 Our result relies on Tao's monotonicity and inherits
all its requirements. We believe the condition on $p$ to be an artifact and that all statements
hold true for much rougher solutions $u$ as well. 
\begin{thm}  Let $p \geq \sqrt{3}$ and let $u$ be a global-in-time Schwartz solution. Then
$$ \left|\left\{t>0: I(u(t)) \leq z \right\}\right| \lesssim_{u(0)} z^{\frac{p}{2}}.$$
\end{thm}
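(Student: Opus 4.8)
The plan is to turn the statement into a differential inequality for the scalar function $t\mapsto I(u(t))$ and then integrate it. The natural starting point is the algebraic identity
$$ I(u) = 2M\int_{\mathbb{R}}(x-\langle x\rangle_M)^2u^2\,dx, $$
obtained by expanding $(x-y)^2=x^2-2xy+y^2$ in the definition of $I$ and using $\int xu^2 = M\langle x\rangle_M$; thus $I$ is, up to the conserved mass, exactly the second moment appearing in the Kwon--Shao bound. Since the defocusing sign gives $\int u_x^2\le 2E$, the one-dimensional uncertainty inequality $\big(\int(x-\langle x\rangle_M)^2u^2\big)\big(\int u_x^2\big)\ge M^2/4$ produces the a priori bound $I(u(t))\ge M^3/(4E)$. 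Hence the sublevel set is empty for $z$ below this data-dependent threshold, and the real content of the theorem is the rate at which $I$ escapes larger levels.

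The crux is a quantitative refinement of Tao's monotonicity yielding $\partial_t I(u(t))\gtrsim_{u(0)} I(u(t))^{1-p/2}$ for almost every $t$. First I would rewrite the supplied formula for $\partial_t I$ purely in terms of Tao's translation-invariant quantity $\Phi:=\langle x\rangle_M-\langle x\rangle_E$ and the weighted kinetic term $K:=\int(\langle x\rangle_M-x)u_x^2\,dx$, eliminating $\int x|u|^{p+1}$ through the energy identity $E\Phi=\tfrac12K+\tfrac1{p+1}\int(\langle x\rangle_M-x)|u|^{p+1}\,dx$, so that $\partial_t I$ becomes a combination of $EM\Phi$ and $MK$. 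The nonlinear weighted moment is harmless: writing $|u|^{p+1}=u^2|u|^{p-1}$ and using the uniform bound $\|u\|_{L^\infty_{t,x}}\lesssim_{u(0)}1$ with Cauchy--Schwarz gives $\big|\int(\langle x\rangle_M-x)|u|^{p+1}\,dx\big|\lesssim_{u(0)}I^{1/2}$, negligible for small $I$. The work is therefore to bound the main term $EM\Phi$ from below by a multiple of $I^{1-p/2}$. I would seek this not algebraically but dynamically: integrating a quantitative form of the strict inequality $\partial_t\Phi>0$ and converting the resulting separation of the centers into the stated power of $I$ via the concentration scale $w\sim I^{1/2}/M$. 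The $p$-dependence should enter through the defocusing potential energy obstructing concentration, since $\int|u|^{p+1}\le(p+1)E$ forces $w\gtrsim (M^{(p+1)/2}/E)^{2/(p-1)}$ and thereby couples the escape rate to the nonlinearity.

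Granting the differential inequality, the conclusion is immediate by ODE comparison. From $\partial_t I\ge c\,I^{1-p/2}$ a.e.\ with $c=c(u(0))>0$ we obtain $\tfrac{d}{dt}\,I^{p/2}\ge \tfrac{p}{2}c$, hence $I(u(t))^{p/2}\ge I(u(0))^{p/2}+\tfrac{p}{2}c\,t$. Consequently $I(u(t))\le z$ can hold only for $t\le \tfrac{2}{pc}\,z^{p/2}$, so that
$$ \left|\left\{t>0:I(u(t))\le z\right\}\right|\le \frac{2}{pc}\,z^{p/2}\lesssim_{u(0)} z^{p/2}, $$
which is the claim.

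I expect the main obstacle to be exactly the refined monotonicity of the middle step: producing a lower bound for $\partial_t I$ with the precise exponent $1-p/2$, uniformly in $t$, while contending with the indefinite signs of $\Phi$ and $K$. The delicate point is that at an instantaneously balanced profile both $\Phi$ and $K$ vanish and $\partial_t I=0$, so no soft inequality can suffice; the growth of $I$ is genuinely driven by kinetic energy (carried by high frequencies, which disperse leftward at speed growing like $k^2$) peeling away from the $L^2$ mass. Reconciling this transport mechanism, which is not controlled by $I$ alone, with a lower bound expressed solely through $M$, $E$ and $I$ is precisely where Tao's monotonicity must be used in an essential and quantitative way.
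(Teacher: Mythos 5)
Your outer scaffolding is sound, but the central step of your argument is not salvageable as stated. You propose the pointwise differential inequality $\partial_t I(u(t)) \gtrsim_{u(0)} I(u(t))^{1-p/2}$ for a.e.\ $t$ and then integrate it by ODE comparison. No such pointwise bound can hold: by the derivative identity $(\diamondsuit)$, the leading term of $\partial_t I$ is $12EM\left(\left\langle x \right\rangle_M - \left\langle x \right\rangle_E\right)$, and initial data with the center of energy far to the right of the center of mass (a small high-frequency bump placed at $x_0 \gg 1$, exactly the scenario analyzed in the paper's ``Interpretation'' remark) make $\partial_t I(u(0))$ strictly negative while $I(u(0))$ is as large as one wishes. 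Indeed, the paper explicitly emphasizes that $I(u(t))$ is neither monotonically increasing nor convex (except at $p=3$). You half-recognize this in your final paragraph (``no soft inequality can suffice''), but your concluding integration requires precisely the soft pointwise inequality you concede is unavailable; moreover, it tacitly assumes the sublevel set $\left\{t : I(u(t)) \leq z\right\}$ is an initial interval $[0,T]$, which non-monotone dynamics do not provide. A second, related error: you invoke the upper bound $\int |u|^{p+1}\,dx \leq (p+1)E$ to constrain the concentration width, but the mechanism of growth needs a \emph{lower} bound on the nonlinear term when $I$ is small, and an upper bound on the potential energy cannot drive growth.

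The paper's proof differs in exactly this step, and the difference is the real content. First, Tao's monotonicity is refined by retaining the discarded term $b^2r^2$, giving
$$\partial_t\left\langle x \right\rangle_M - \partial_t\left\langle x \right\rangle_E \gtrsim_p \frac{1}{EM^3}\left(\int_{\mathbb{R}}|u|^{p+1}\,dx\right)^2.$$
Second, when $I(u(t))$ is small, half the mass lies in an interval $J$ with $|J| \leq 4\sqrt{I}/M$, and H\"older on $J$ yields the lower bound $\left(\int |u|^{p+1}\,dx\right)^2 \gtrsim M^{2p} I(u(t))^{(1-p)/2}$ --- the step your proposal lacks. Third, since the refined monotonicity controls $\partial_t(\left\langle x \right\rangle_M - \left\langle x \right\rangle_E)$ rather than $\partial_t I$ itself, it must be integrated in time before being inserted into $(\diamondsuit)$, producing an integro-differential inequality \emph{with memory},
$$f'(t) \geq \alpha \int_0^t f(z)^{\frac{1-p}{2}}\,dz - \beta\sqrt{f(t)} - \gamma,$$
where $\gamma$ records the initial separation of the centers; there is no pointwise lower bound for $\partial_t I$ in terms of $I(u(t))$ alone. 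Fourth, a separate lemma (the paper's Lemma 2) shows that any $f > \delta$ obeying this memory inequality satisfies the sublevel estimate: once the total measure of time spent at levels $\leq z$ reaches $cz^{p/2}$, the memory integral forces $f' \gtrsim \sqrt{z}$ at every subsequent sublevel point, so $f$ escapes within time $C\sqrt{z}$ and can never re-enter (at a first return the derivative would have to be nonpositive, contradicting the inequality). Your correct observations --- the identity $I(u) = 2M\int(x-\left\langle x \right\rangle_M)^2 u^2\,dx$, the uncertainty-principle bound $I \geq M^3/(4E)$ (which supplies the paper's $\delta$ by an arguably cleaner route than its Lemma 3), and the Cauchy--Schwarz control of the nonlinear moment by $I^{1/2}$ --- all survive and match the paper, but without the concentration--H\"older step, the memory formulation, and the sublevel lemma, the proof does not close.
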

Our emphasis is on the following: the functional $I(u(t))$ is neither monotonically 
increasing nor convex (except for $p=3$) but can nonetheless only be small for a 
bounded amount of time. The proof is based on showing that $I(u(t))$ satisfies a certain integrodifferential
inequality, where the influence of nonlinear dynamics acting on regions of mass concentration can
be clearly observed. The precise dependence of the implicit constant on the initial data follows 
from the proof and is related to mass, energy and the centers of mass and energy.\\

The following minor improvement of the Kwon-Shao nonexistence result is a trivial consequence.
\begin{corollary} Let $\varepsilon > 0$ be fixed. Under the assumptions of Theorem 1, if 
$$ |u(t,x)| \lesssim \frac{(1+t)^{\frac{2}{p}-\varepsilon}}{|x-x(t)|^{\frac{3}{2}+\varepsilon}}$$
for some $x:\mathbb{R}\rightarrow \mathbb{R}$, then $u\equiv 0$.
\end{corollary}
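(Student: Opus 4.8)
The plan is to argue by contradiction. Assume $u \not\equiv 0$, so that the conserved mass $M = M(u(0))$ is strictly positive, and show that the hypothesized decay forces $I(u(t))$ to grow so slowly in $t$ that it contradicts the sublevel estimate of Theorem 1. Since $I$ is invariant under spatial translation, I may recenter so that the quadratic weight is measured from the given curve $x(t)$. The first step is to reduce $I$ to a weighted second moment: as in the Kwon--Shao computation recalled above one has the elementary identity $I(u(t)) = 2M \int_{\mathbb{R}} (x - \langle x \rangle_M)^2 u(t,x)^2 \, dx$, and since $\langle x \rangle_M$ minimizes $c \mapsto \int_{\mathbb{R}} (x-c)^2 u(t,x)^2 \, dx$, this yields the upper bound $I(u(t)) \le 2M \int_{\mathbb{R}} (x - x(t))^2 u(t,x)^2 \, dx$ for the particular center $x(t)$ appearing in the hypothesis.

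The second and substantive step is to estimate this moment. The weight $(x-x(t))^2$ causes no trouble at spatial infinity, but the decay hypothesis is useless near $x = x(t)$, where instead $|u|$ is controlled by the a priori bound $\|u\|_{L^\infty_{t,x}} \lesssim_{u(0)} 1$ furnished by the defocusing global theory. I would therefore split the integral at a radius $R$ to be optimized: on $|x - x(t)| \le R$ I use the uniform bound, which contributes $\lesssim_{u(0)} R^3$, while on $|x - x(t)| > R$ I insert $|u(t,x)|^2 \lesssim (1+t)^{4/p - 2\varepsilon} |x-x(t)|^{-3-2\varepsilon}$, which contributes $\lesssim_{\varepsilon} (1+t)^{4/p-2\varepsilon} R^{-2\varepsilon}$. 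Choosing $R$ to balance the two pieces turns this into a clean polynomial bound $I(u(t)) \lesssim_{u(0)} (1+t)^{\beta}$ with an explicit exponent $\beta = \beta(p,\varepsilon)$; the spatial exponent $3/2 + \varepsilon$ is exactly the Kwon--Shao integrability threshold, and the point of the optimization, together with the temporal weight $(1+t)^{2/p-\varepsilon}$, is to render $\beta$ small enough for the final step.

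The conclusion then follows by playing this bound against Theorem 1. For an arbitrary $T > 0$ set $z = C_{u(0)} (1+T)^{\beta}$, the value of the bound at time $T$; then every $t \in (0,T)$ satisfies $I(u(t)) \le z$, so $(0,T) \subseteq \{ t > 0 : I(u(t)) \le z \}$ and Theorem 1 gives $T \le \left| \{ t > 0 : I(u(t)) \le z \} \right| \lesssim_{u(0)} z^{p/2} \lesssim_{u(0)} (1+T)^{\beta p / 2}$. Letting $T \to \infty$ forces $\beta p/2 \ge 1$. I expect the main obstacle to be precisely the verification, in the second step, that the optimized exponent satisfies the reverse inequality $\beta p / 2 < 1$: this is the one genuine inequality in the argument, it is where the numerology $2/p - \varepsilon$ and $3/2 + \varepsilon$ of the hypothesis is consumed, and it is also where the non-integrability of the decay bound against the quadratic weight (handled by the a priori $L^\infty$ control near the center) makes itself felt. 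Once it holds, the displayed chain is contradictory for large $T$, so $u \not\equiv 0$ is impossible and $u \equiv 0$.
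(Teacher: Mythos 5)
Your architecture is exactly the paper's intended one: the paper offers no proof of Corollary 1, calling it a trivial consequence of Theorem 1, and the route it has in mind is precisely yours --- bound $I(u(t))$ polynomially in $t$ using the identity $I(u(t)) = 2M\int_{\mathbb{R}}(x-\langle x\rangle_M)^2u^2\,dx \leq 2M\int_{\mathbb{R}}(x-x(t))^2u^2\,dx$ (your identity is correct), then play the inclusion $(0,T)\subseteq\{t: I(u(t))\leq z\}$ against the sublevel estimate. Your third step is also correct: the contradiction requires exactly $\beta p/2<1$.

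The genuine gap is that the one inequality you deferred --- and correctly identified as the crux --- fails for small $\varepsilon$. With your splitting (inner part $\lesssim R^3$ from the uniform bound, outer part $\lesssim (1+t)^{4/p-2\varepsilon}R^{-2\varepsilon}$), balancing gives $R^{3+2\varepsilon}=(1+t)^{4/p-2\varepsilon}$ and hence
$$\beta=\frac{6\left(\frac{2}{p}-\varepsilon\right)}{3+2\varepsilon},\qquad \frac{\beta p}{2}=\frac{3(2-p\varepsilon)}{3+2\varepsilon}\longrightarrow 2 \quad (\varepsilon\to 0),$$
so your contradiction closes only for $\varepsilon>\frac{3}{3p+2}$. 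Replacing the inner estimate by the better $\int_{|x-x(t)|\leq R}(x-x(t))^2u^2\,dx\leq MR^2$ improves this to $\beta=\frac{4/p-2\varepsilon}{1+\varepsilon}$, i.e.\ $\beta p/2=\frac{2-p\varepsilon}{1+\varepsilon}$, which is $<1$ only for $\varepsilon>\frac{1}{p+1}$ --- and this is sharp for the method: subject only to $\int u^2=M$ and the pointwise envelope $|u|\leq A(t)|x-x(t)|^{-3/2-\varepsilon}$, the second moment can genuinely be as large as $\sim A(t)^{2/(1+\varepsilon)}M^{\varepsilon/(1+\varepsilon)}$ (concentrate all the mass near $|x-x(t)|\sim (A^2/M)^{1/(2+2\varepsilon)}$), so no choice of splitting radius can do better. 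The structural reason is that the amplitude bound enters $I$ \emph{quadratically}: a temporal factor $(1+t)^{2/p-\varepsilon}$ yields $I(u(t))\lesssim (1+t)^{4/p-2\varepsilon+o(1)}$, whereas the contradiction needs growth strictly slower than $(1+t)^{2/p}$. Your scheme proves the corollary for every $\varepsilon>0$ exactly when the temporal exponent in the hypothesis is $\frac{1}{p}-\varepsilon$ (then even the crude choice $R=1$ gives $\beta=\frac{2}{p}-2\varepsilon<\frac{2}{p}$, no optimization needed); the printed exponent $\frac{2}{p}-\varepsilon$ corresponds to inserting the amplitude bound into $I$ linearly rather than quadratically. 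So under the hypothesis as literally stated, your proposal (and the paper's implicit argument, which it faithfully reproduces) establishes the conclusion only in the range $\varepsilon>\frac{1}{p+1}$; covering all $\varepsilon>0$ would require either an idea beyond the second-moment envelope estimate, or the corrected temporal exponent.
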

For the particular case of the mKdV ($p=3$), some algebraic simplifications immediately
imply that the functional $I(u(t))$ is convex. However, in this case the connection to the focusing KdV
via the Miura map should give a wealth of additional information anyway -- the problem can be
expected to be much simpler in this special case. \\

We consider the following corollary to be a much more interesting consequence: its conditions
on center and mass are immediately seen to be necessary for the argument to work, however, 
heuristic arguments (to be described below) make them also seem necessary.

\begin{corollary}
  Let $p \geq \sqrt{3}$ and let $u$ be a global-in-time Schwartz solution. There exists a constant
$c>0$ depending only on $p$ such that if
 $$  \left\langle x \right\rangle_E\big|_{t=0} \leq  \left\langle x \right\rangle_M\big|_{t=0},$$
and
$$ I(u(0)) \leq c M^{4-\frac{p+3}{p-1}}E^{-1}$$
then
$$ \inf_{t > 0}{I(u(t))} \geq \frac{1}{4}I(u(0)).$$
\end{corollary}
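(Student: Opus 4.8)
The starting point is the identity for $\partial_t I(u(t))$ recorded in the introduction, which I write as $\partial_t I = 12EM\,D(t) + R(t)$ with $D(t) := \langle x \rangle_M - \langle x \rangle_E$ and $R(t) := \tfrac{4p-12}{p+1}M\big(\langle x \rangle_M\int|u|^{p+1} - \int x|u|^{p+1}\big)$. The role of the first hypothesis is immediate: by Tao's monotonicity $\partial_t(\langle x \rangle_M - \langle x \rangle_E) > 0$, so $D$ is increasing, and since $D(0)\ge 0$ we get $D(t)\ge 0$ for all $t\ge 0$. Thus the leading term of $\partial_t I$ is nonnegative throughout, and the only mechanism that can decrease $I$ is the nonlinear remainder $R$. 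Writing $V := \int(x-\langle x \rangle_M)^2u^2 = I/(2M)$ and applying Cauchy--Schwarz to $\int(x-\langle x \rangle_M)|u|^{p+1}$, I would bound $|R(t)| \le \beta\, I(t)^{1/2}$, where $\beta = \beta(M,E,p)$ is controlled through the conservation laws together with the Gagliardo--Nirenberg bound on $\|u\|_{L^\infty}$.

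The decisive ingredient is a \emph{restoring force} that activates precisely when $I$ is small. Here I would use the refinement of Tao's monotonicity underlying Theorem 1: computing the energy flux and discarding manifestly nonnegative contributions leaves $\partial_t D \ge \tfrac{3}{2E}\int u_{xx}^2 - \tfrac{c_pE}{M}$, and two applications of the uncertainty inequality, $\int u_x^2\ge M^2/(4V) = M^3/(2I)$ followed by $\int u_{xx}^2\ge(\int u_x^2)^2/M$, yield $\partial_t D \ge \gamma/I^2 - \delta$ with $\gamma\sim M^5/E$ and $\delta\sim E/M$. The same inequality $V\int u_x^2\ge M^2/4$, combined with $\int u_x^2\le 2E$, gives the unconditional floor $I(t)=2MV\ge M^3/(4E)$, which is the ultimate reason $I$ cannot collapse. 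The smallness hypothesis on $I(0)$ is exactly what forces $\gamma/I^2$ to dominate $\delta$ on the whole range $I\le I(0)$, so that $D$ does not merely stay nonnegative but grows at a definite rate wherever $I$ is small.

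With these two estimates I would close the argument by continuity. Assume for contradiction that $I$ first reaches $I(0)/4$ at some time $t_1$, and let $\tau<t_1$ be the last time before $t_1$ at which $I(\tau)=I(0)$; then $I(0)/4\le I\le I(0)$ on $[\tau,t_1]$ and $I'(t_1)\le 0$. On this interval the restoring bound gives $\partial_t D\ge\gamma/(2I(0)^2)$, so $D$ accumulates at least $\sim(t_1-\tau)\,\gamma/I(0)^2$; on the other hand $I'(t_1)\le 0$ together with $|R(t_1)|\le\beta(I(0)/4)^{1/2}$ forces $12EM\,D(t_1)\le\tfrac{\beta}{2}I(0)^{1/2}$, which bounds $t_1-\tau$ from above. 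Meanwhile $I$ has fallen by $\tfrac34 I(0)$ while $|I'|\le\beta I(0)^{1/2}$, which bounds $t_1-\tau$ from below. These two bounds are incompatible as soon as $I(0)$ is below a constant multiple of $\sqrt{EM\gamma}\,\beta^{-1}$; unwinding the dependence on $M$ and $E$ yields a smallness threshold of the type appearing in the corollary, and the resulting contradiction gives $\inf_{t>0}I(u(t))\ge\tfrac14 I(u(0))$, with the precise exponent $4-\tfrac{p+3}{p-1}$ to be recovered from the sharp form of the estimates above.

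The main obstacle is the nonlinear remainder $R$. Its worst-case contribution is of size $\beta I^{1/2}$, and as $I\to 0$ this competes with the restoring term in a way that \emph{cannot} be absorbed by a naive Lyapunov functional: in the virial formulation $\tfrac12(I')^2 + 12EM\gamma/I$ the perturbation coming from $R$ is more singular than the confining potential, so the energy-conservation heuristic survives only on the region $I\ge I(0)/4$ and only when $I(0)$ is small. This is exactly where both hypotheses enter and why neither can be dropped: smallness of $I(0)$ keeps the trajectory in the region where $R$ is subordinate to the restoring term, while the sign condition $\langle x \rangle_E\le\langle x \rangle_M$ at $t=0$ is what makes $D\ge 0$ and hence makes the leading term restoring rather than destabilizing. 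The remaining work is bookkeeping: establishing the refined monotonicity for Schwartz data, making the uncertainty lower bounds rigorous, and tracking the constants $\beta,\gamma,\delta$ carefully enough to land on the stated threshold.
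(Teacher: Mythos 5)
Your overall scheme is structurally parallel to the paper's: the sign hypothesis makes the leading term $12EM\,D(t)$ of $(\diamondsuit)$ nonnegative (in the paper this is $\gamma\le 0$ in Lemma 3), the remainder is bounded by $\beta\sqrt{I}$ exactly as in the paper (via $\|u\|_{L^\infty}\lesssim (ME)^{1/4}$), and the conclusion rests on a coercive growth of $D$ whenever $I$ is small. Your uncertainty-principle proof of the floor $I\ge M^3/(4E)$ (from $I=2MV$, $V\int u_x^2\ge M^2/4$, $\int u_x^2\le 2E$) is a correct and arguably cleaner alternative to the paper's Lemma 5. But the decisive step of your argument --- the ``restoring force'' $\partial_t D\ge \frac{3}{2E}\int u_{xx}^2-\frac{c_pE}{M}$ --- is asserted, not derived, and it does not follow from Tao's computation. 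In Tao's algebraic identity the coefficient of $a^2$ (where $a^2M=\int u_{xx}^2$) is $\frac{3}{2}(1-q^2)$, not $\frac{3}{2}$, and $q$ can approach $1$ (near-equality in $\int u_x^2\le\|u\|_{L^2}\|u_{xx}\|_{L^2}$), in which case the $a^2$ contribution degenerates entirely; positivity in that regime is rescued by the constraint $1-q^2-r^2-s^2+2qrs\ge 0$ forcing $r\approx s$, and the surviving terms are of order $ab$ and $b^2$, with no $a^2$ to spare. The only cleanly extractable surplus, and what the paper's Lemma 1 uses, is the gap between $\frac{4p}{(p+1)^2}r^2$ and $r^2$ in the $b^2$ coefficient, giving $\partial_t D\gtrsim_p \frac{1}{EM^3}\bigl(\int|u|^{p+1}\bigr)^2$.

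This changes the quantitative structure downstream. The paper converts smallness of $I$ into largeness of $\int|u|^{p+1}$ by a mass-concentration argument you are missing: if $I(u(t))$ is small, the middle half of the $L^2$-mass lives on an interval $J$ with $|J|\le 4\sqrt{I}/M$, and H\"older on $J$ gives $\bigl(\int|u|^{p+1}\bigr)^2\gtrsim M^{2p}I^{\frac{1-p}{2}}$, so the restoring term is $\sim I^{\frac{1-p}{2}}$, not your $I^{-2}$ (these agree only at $p=5$). This is why your threshold $I(0)\lesssim \sqrt{EM\gamma}\,\beta^{-1}\sim M^2(EM)^{-\frac{p-1}{4}}$ does not reproduce the stated exponent $M^{4-\frac{p+3}{p-1}}E^{-1}$, which falls out of the paper's condition $f(0)<(\alpha/\beta^2)^{\frac{2}{p-1}}$ with $\alpha\sim M^{2p-2}$, $\beta\sim M(EM)^{\frac{p-1}{4}}$. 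Your closing continuity/contradiction argument on $[\tau,t_1]$ is sound in outline (the paper instead compares $f$ with the parabola $g(x)=\frac{1}{4}(2\sqrt{f(0)}-\beta x)^2$ and shows the minimum occurs before $x=\sqrt{f(0)}/\beta$, where the integral term already dominates), and it would close once the restoring inequality is replaced by the correct one; as written, however, the proposal has a genuine gap at its central estimate.
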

\textit{Remark.} As we will show below, the assumption $I(u(0)) \leq c M^{4-\frac{p+3}{p-1}}E^{-1}$ does imply $M < c'$
for some constant $c' = c'(c,p)$. The entire statement is thus only applicable to initial data with small mass.\\

\textit{Interpretation.} If the function is sufficiently localized and very smooth, then at least
some part of it needs to break off and go away quickly never to return. The assumption on the centers
is not at all unreasonable: assume $u(0)$ is some $L^2-$normalized bump function localized in space
$x \sim 0$ and Fourier space $\xi \sim 1$ and add a small perturbation $w$ localized around
$x \sim x_0 \gg 1$ and $\xi \sim N$. Then, for $x_0 \gg \|w\|_{L^2}^{-1}$,
$$ I(u(0)) \sim x_0^2 \|w\|_{L^2}^2.$$
Assuming $w$ to be small in $L^{\infty}$, we expect linear dynamics to be dominating. This 
means that the perturbation $w$ moves with speed $-3N^2$ and noticeably decreases the functional (if
$N \gg \sqrt{x_0}$) while the big bump function $u_0$ barely moves at all during that time. We need to
exclude this scenario and indeed, for centers of mass and energy, we have
$$  \left\langle x \right\rangle_M\big|_{t=0} \sim x_0 \|w\|_{L^2}^2 \qquad 
 \left\langle x \right\rangle_E\big|_{t=0} \sim x_0 \|w_x\|_{L^2}^2 \sim x_0 N^2 \|w\|_{L^2}^2.$$
The condition $ \left\langle x \right\rangle_E\big|_{t=0} \leq  \left\langle x \right\rangle_M\big|_{t=0}$
now implies $N \lesssim 1$ but in that case the perturbation actually moves slower than $u(0)$ and the
problem cannot occur.\\

A small caveat: we need to make sure that 
$$ \mbox{no inequality of the type} \quad I(u(0)) \geq c M^{4-\frac{p+3}{p-1}}E^{-1} \quad \mbox{holds.}$$
Otherwise the statement would be a statement about the energy landscape of the functional $I$ and not about
the dynamics of the equation. We give a quick classification of all $(\alpha, \beta) \in \mathbb{R}^2$
for which $I(u) \gtrsim M(u)^{\alpha}E(u)^{\beta}$ holds true.

\begin{proposition} Let $p > 1$,
 $$3 \leq \alpha \leq \frac{4p}{p-1} \qquad \mbox{and} \qquad \beta = \frac{(4-\alpha)p+5\alpha - 12}{p+3}.$$
Then there exists a constant $c_p > 0$ such that for any function $u \in H^1(\mathbb{R})$ 
$$ \left(\int_{\mathbb{R}}{\frac{u_x^2}{2} + \frac{|u|^{p+1}}{p+1}dx}\right)^{\beta}\left[ \int_{\mathbb{R}}{\int_{\mathbb{R}}{u(x)^2(x-y)^2u(y)^2dx dy}} \right] \geq c_p\left(\int_{\mathbb{R}}{u^2dx}\right)^{\alpha}.$$
\end{proposition}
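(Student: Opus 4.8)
The plan is to reduce the double integral to a one-dimensional second moment and then prove the resulting variance bound at two endpoints, interpolating in between. Writing $M=\int u^2\,dx$, $m_1=\int x u^2\,dx$ and $m_2=\int x^2 u^2\,dx$, the expansion $(x-y)^2=x^2-2xy+y^2$ gives
$$ I(u)=\int\int u(x)^2(x-y)^2u(y)^2\,dx\,dy = 2\big(Mm_2-m_1^2\big). $$
Since $I$, $M$ and $E$ are invariant under the translation $u(\cdot)\mapsto u(\cdot-a)$, I would center the mass so that $m_1=0$, whence $I(u)=2Mm_2$, with $m_2$ the mass-weighted variance of the density $u^2/M$. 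If $m_2=\infty$ the inequality is trivial, so assume $m_2<\infty$; the claim then reduces to the variance lower bound $m_2\,E^{\beta}\gtrsim_p M^{\alpha-1}$.

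I would establish this at the two endpoints of $[3,\tfrac{4p}{p-1}]$. At $\alpha=3$ (so $\beta=1$) this is the uncertainty principle: the integration by parts $M=\int u^2\,dx=-2\int x u\,u_x\,dx$ together with Cauchy--Schwarz gives $M^2\le 4\,m_2\int u_x^2\,dx$, and then $\int u_x^2\,dx\le 2E$ yields $m_2\,E\gtrsim M^2$. At $\alpha=\tfrac{4p}{p-1}$ (so $\beta=\tfrac{4}{p-1}$) I would argue by concentration: Chebyshev's bound $\int_{|x|>R}u^2\,dx\le m_2/R^2$ shows that with $R=\sqrt{2m_2/M}$ at least half the mass lies in $[-R,R]$, and Hölder on that interval,
$$ \frac{M}{2}\le\int_{-R}^{R}u^2\,dx\le\Big(\int|u|^{p+1}\,dx\Big)^{\frac{2}{p+1}}(2R)^{\frac{p-1}{p+1}}, $$
rearranges, after inserting $R$, into $m_2\,\big(\int|u|^{p+1}\big)^{\frac{4}{p-1}}\gtrsim_p M^{\frac{3p+1}{p-1}}$; since $\tfrac{3p+1}{p-1}=\alpha-1$ at this endpoint, the bound $\int|u|^{p+1}\,dx\le(p+1)E$ closes the estimate.

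For general $\alpha$ I would parametrize $\alpha=3+\tfrac{p+3}{p-1}\theta$ with $\theta\in[0,1]$, which traces exactly the admissible interval. Taking the geometric mean of the two endpoint variance bounds with weights $1-\theta$ and $\theta$ gives $m_2\gtrsim_p M^{\alpha-1}\big(\int u_x^2\big)^{-(1-\theta)}\big(\int|u|^{p+1}\big)^{-\frac{4}{p-1}\theta}$. Since both $E$-exponents appearing are nonnegative, the inequalities $\int u_x^2\le 2E$ and $\int|u|^{p+1}\le(p+1)E$ yield $\big(\int u_x^2\big)^{1-\theta}\big(\int|u|^{p+1}\big)^{\frac{4}{p-1}\theta}\le C_pE^{(1-\theta)+\frac{4}{p-1}\theta}$, and a short computation verifies $(1-\theta)+\tfrac{4}{p-1}\theta=\tfrac{(4-\alpha)p+5\alpha-12}{p+3}=\beta$. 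This gives $m_2E^{\beta}\gtrsim_p M^{\alpha-1}$ and hence, via $I=2Mm_2$, the Proposition.

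The main conceptual step is the reduction to a variance, after which the content sits entirely in the two endpoint estimates. I expect the delicate point to be the nonlinear endpoint: one must choose the concentration radius $R$ as a specific power of $m_2/M$ so that the Hölder exponents land precisely on $\tfrac{4}{p-1}$ and $\tfrac{3p+1}{p-1}$. This is also where the upper bound $\alpha\le\tfrac{4p}{p-1}$ originates, and the two-parameter scaling $u\mapsto\mu\,u(\cdot/\lambda)$ shows that both endpoint inequalities, and therefore the whole interpolated family, are sharp in their homogeneity; this is what forces exactly this range of $(\alpha,\beta)$ and matches the classification promised in the remark preceding the Proposition.
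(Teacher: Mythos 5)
Your proof is correct, and while its skeleton (prove the endpoints $\alpha = 3$ and $\alpha = \frac{4p}{p-1}$, then take a weighted geometric mean) matches the paper's reduction, the substance of both endpoints is genuinely different. The paper reduces the double integral to a one-dimensional moment via symmetric decreasing rearrangement (its Lemma 5, showing $I$ decreases under rearrangement, after which $I = 2\left(\int u^2 x^2 dx\right)\left(\int u^2 dx\right)$ for symmetric $u$) and then invokes Proposition 2, which it proves through a compactness argument (Feichtinger/Pego) and Lagrange multipliers; its $\alpha = 3$ endpoint uses Sobolev embedding combined with the half-mass interval of length $|J| \lesssim \sqrt{I}/M$. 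You instead observe the exact identity $I(u) = 2(Mm_2 - m_1^2)$, which after centering gives $I = 2Mm_2$ for \emph{arbitrary} $u$ -- this renders the rearrangement lemma unnecessary for Proposition 1 -- and then handle the nonlinear endpoint by bare hands (Chebyshev with $R = \sqrt{2m_2/M}$ traps half the mass in $[-R,R]$, then H\"older) and the $\alpha = 3$ endpoint via the classical Heisenberg argument. Your exponent bookkeeping checks out: $2(1-\theta) + \frac{3p+1}{p-1}\theta = \alpha - 1$, $(1-\theta) + \frac{4}{p-1}\theta = \beta$, and replacing $\int u_x^2 dx \leq 2E$ and $\int |u|^{p+1}dx \leq (p+1)E$ is legitimate precisely because both weights are nonnegative (this is where the defocusing sign, i.e., positivity of both energy terms, enters). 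What your route buys is elementarity and self-containedness: no rearrangement, no compactness, no extremizer theory. What it gives up is exactly the content of the paper's Proposition 2, namely the sharp constant $c_p$ and the explicit extremizers $(1-x^2)^{1/(p-1)}$ on $|x|\leq 1$, which the paper values independently but which Proposition 1 does not require. Two minor points to include in a full write-up: justify the integration by parts $M = -2\int x u u_x dx$ for general $u \in H^1$ with $m_2 < \infty$ by a density argument, and note that $m_2 = \infty$ with $u \not\equiv 0$ forces $I = \infty$, so that case is indeed trivial.
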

A simple scaling argument shows that these are the only $(\alpha, \beta)$ for which such an inequality
can possibly hold. In particular, setting $\alpha = 3$ gives $\beta = -1$ and therefore
$$ I(0) \gtrsim M^3 E^{-1},$$
which renders Corollary 2 nontrivial but also shows that it is only applicable in the case of small mass. 
These inequalities seem fairly technical and of little intrinsic interest. That is why we were
surprised about the following: Proposition 1 is implied by Sobolev embedding and interpolation with 
the following elementary inequality, which we couldn't find in the literature. In an earlier version
of the manuscript, we proved existence of extremizers and gave some rough bounds on
$c_p$. However, these extremizers can be found explicitely via the Lagrange multiplier theorem (an 
observation that was communicated to us by Soonsik Kwon).

\begin{proposition}
 Let $p > 1$ and
$$ c_p = \frac{1}{2\pi}\frac{\Gamma\left(\frac{p+1}{p-1}\right)}{\Gamma\left(\frac{2p}{p-1}\right)}
\left(\frac{\Gamma\left(\frac{2p}{p-1}\right)}{\Gamma\left(\frac{5p-1}{2p-2}\right)}\right)^{\frac{p+3}{p-1}}
\left(\frac{\Gamma\left(\frac{3p+1}{2p-2}\right)}{\Gamma\left(\frac{p+1}{p-1}\right)}\right)^{\frac{3p+1}{p-1}} \geq \frac{1}{2\pi e}.$$
 Then, for every $u \in L^2(\mathbb{R})$,
$$\left(\int_{\mathbb{R}}{u^2x^2 dx}\right)\left(\int_{\mathbb{R}}{ |u|^{p+1}dx}\right)^{\frac{4}{p-1}} \geq c_p\left(\int_{\mathbb{R}}{u^2dx}\right)^{\frac{3p+1}{p-1}}.$$
Additionally, all minimizers are given by translation, scaling and dilation of the compactly supported function
$$ u(x) = \begin{cases}
           (1-x^2)^{\frac{1}{p-1}} \qquad &\mbox{if}~|x| \leq 1\\
0 \qquad &\mbox{otherwise.}
          \end{cases}$$
\end{proposition}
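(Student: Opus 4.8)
The plan is to read the stated inequality as a sharp constrained minimization and to prove global minimality of the candidate profile by a linearization (tangent) argument, avoiding any compactness or existence machinery. Write $U(x)=(1-x^2)_+^{1/(p-1)}$ and
\[ J(u)=\frac{\left(\int_{\mathbb{R}}u^2x^2\,dx\right)\left(\int_{\mathbb{R}}|u|^{p+1}\,dx\right)^{4/(p-1)}}{\left(\int_{\mathbb{R}}u^2\,dx\right)^{(3p+1)/(p-1)}}. \]
First I would check that $J$ is invariant under the two-parameter group $u\mapsto c\,u(\cdot/\lambda)$ (the exponents $4/(p-1)$ and $(3p+1)/(p-1)$ are precisely those that balance the homogeneities under amplitude scaling and dilation) and that $J(u)=J(|u|)$. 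Consequently it suffices to treat $u\geq 0$ and, using the scaling freedom, to normalize so that $\int u^2=\int U^2$ and $\int|u|^{p+1}=\int U^{p+1}$; the claim then reduces to the single inequality $\int_{\mathbb{R}}x^2u^2\geq\int_{\mathbb{R}}x^2U^2$.

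The key step is that $U$ satisfies the pointwise Euler--Lagrange relation $x^2-1=-U^{p-1}$ on $(-1,1)=\{U>0\}$, while $x^2-1\geq 0$ on $\{U=0\}$. Since the masses are matched, the term linear in the mass multiplier cancels and I can write
\[ \int_{\mathbb{R}}x^2u^2-\int_{\mathbb{R}}x^2U^2=\int_{\mathbb{R}}(x^2-1)(u^2-U^2)=\int_{|x|\geq1}(x^2-1)u^2+\int_{-1}^{1}U^{p-1}(U^2-u^2). \]
The first integral is manifestly nonnegative. For the second, $\int_{-1}^1 U^{p-1}u^2=\int_{\mathbb{R}}U^{p-1}u^2$, and H\"older with conjugate exponents $\tfrac{p+1}{p-1}$ and $\tfrac{p+1}{2}$ gives $\int_{\mathbb{R}}U^{p-1}u^2\leq(\int U^{p+1})^{(p-1)/(p+1)}(\int u^{p+1})^{2/(p+1)}=\int U^{p+1}$ thanks to the matched $L^{p+1}$ norm; hence the second integral is nonnegative as well. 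This proves the inequality, and I want to stress that the crucial sign works out precisely because the $L^{p+1}$ constraint makes H\"older point in the favourable direction.

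To identify the constant I would evaluate the three integrals at $U$ as Beta integrals, using $\int_{-1}^{1}(1-x^2)^s\,dx=\sqrt{\pi}\,\Gamma(s+1)/\Gamma(s+\tfrac32)$ and $\int_{-1}^{1}x^2(1-x^2)^s\,dx=\tfrac{\sqrt{\pi}}{2}\Gamma(s+1)/\Gamma(s+\tfrac52)$ with $s=\tfrac{2}{p-1}$ (for $\int U^2$ and $\int x^2U^2$) and $s=\tfrac{p+1}{p-1}$ (for $\int U^{p+1}$). Assembling $J(U)$, the powers of $\pi$ collapse to $\pi^{-1}$ and the Gamma factors reorganize into exactly the stated $c_p$; this is routine but bookkeeping-heavy. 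Uniqueness of minimizers follows from the equality analysis above: equality in H\"older forces $u\propto U$ on $(-1,1)$, vanishing of the first integral forces $u\equiv0$ on $\{|x|\geq1\}$, and then the scaling and dilation freedom (and, in the centered formulation relevant to Proposition~1, translation) produces the claimed family.

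The part I expect to be most delicate is the closed-form lower bound $c_p\geq\frac{1}{2\pi e}$. Here I would substitute $t=\tfrac{2p}{p-1}\in(2,\infty)$, which turns the four Gamma arguments into $t-1,\,t,\,t-\tfrac12,\,t+\tfrac12$ and the exponents into $2t-3$ and $2t-1$, reducing the claim to
\[ g(t):=\frac{1}{t-1}\left(\frac{t-1}{t-\tfrac12}\right)^{2t-1}\left(\frac{\Gamma(t+\tfrac12)}{\Gamma(t)}\right)^{2}\geq\frac{1}{e}. \]
The asymptotics $\Gamma(t+\tfrac12)/\Gamma(t)\sim\sqrt{t}$ together with $\bigl(\tfrac{t-1}{t-1/2}\bigr)^{2t-1}\to e^{-1}$ show $g(t)\to e^{-1}$ as $t\to\infty$, that is, as $p\to1^+$, so the bound is sharp in that limit and never attained. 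Turning this limiting statement into the pointwise inequality for every $t>2$ is the main obstacle; I would establish it by proving that $g$ is monotonically decreasing, e.g. by showing $(\log g)'\leq 0$ via the series representation of the digamma function together with a Gautschi--Kershaw-type two-sided bound on $\Gamma(t+\tfrac12)/\Gamma(t)$.
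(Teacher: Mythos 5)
Your argument is correct, and it takes a genuinely different route from the paper. The paper proves the inequality by rearrangement plus soft analysis: it takes a minimizing sequence of symmetric decreasing functions normalized by $u(0)=1$ and $\int x^2u^2=1$, extracts a minimizer via the Feichtinger--Pego compactness criterion (the uniform Fourier decay $|\hat u(\xi)|\lesssim |\xi|^{-1}$ coming from monotonicity and symmetry), and only then identifies the extremizer from the Lagrange multiplier relation $u^p=\lambda_1 x^2u+\lambda_2 u$. Your matched-normalization (``tangent'') argument dispenses with existence altogether: since $u\mapsto c\,u(\cdot/\lambda)$ acts on $\bigl(\int u^2,\int|u|^{p+1}\bigr)$ with exponent matrix of determinant $1-p\neq 0$, you may match both norms to those of $U$, and the pointwise relation $x^2-1=-U^{p-1}$ on $\{U>0\}$ together with H\"older with exponents $\bigl(\tfrac{p+1}{p-1},\tfrac{p+1}{2}\bigr)$ gives $\int x^2u^2\geq \int x^2U^2$ outright; your equality analysis then classifies the minimizers (up to the pointwise sign ambiguity $u=\pm U$, which the paper's statement also glosses over, since the functional sees only $|u|$; your reading of ``translation'' as belonging to the centered formulation is the sensible one). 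This buys you something the paper's proof does not literally deliver: the Lagrange multiplier step there, as written, only singles out critical points, whereas your computation certifies global minimality and produces the sharp constant and uniqueness in one stroke; conversely, the paper's compactness method would survive in settings without a closed-form extremizer. Your Beta-integral bookkeeping and the substitution $t=\tfrac{2p}{p-1}$ check out: the powers of $\sqrt{\pi}$ sum to $-2$, the Gamma exponents reproduce $c_p$ exactly, and the four Gamma arguments become $t-1,\,t,\,t+\tfrac12,\,t-\tfrac12$ with exponents $2t-3$ and $2t-1$ as you claim.

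The only step you leave as a plan is the monotonicity of $g$, needed for $c_p\geq\tfrac{1}{2\pi e}$. Note that the paper does not prove this bound at all (it is asserted in the statement without argument), so you are not behind the paper here; moreover, your strategy closes in two lines. Differentiating, the elementary terms largely cancel and one finds
\[ (\log g)'(t)=2\left[\log\frac{t-1}{t-\tfrac12}+\psi\left(t+\tfrac12\right)-\psi(t)\right], \]
so monotone decrease is exactly the digamma inequality $\psi(t+\tfrac12)-\psi(t)\leq \log\tfrac{t-1/2}{t-1}$. From $\psi'(u)=\sum_{k\geq 0}(u+k)^{-2}<\sum_{k\geq 0}\bigl[\tfrac{1}{u+k-1/2}-\tfrac{1}{u+k+1/2}\bigr]=\tfrac{1}{u-1/2}$ one gets $\psi(t+\tfrac12)-\psi(t)<\log\tfrac{t}{t-1/2}$, and $\tfrac{t}{t-1/2}\leq \tfrac{t-1/2}{t-1}$ reduces to $t(t-1)\leq (t-\tfrac12)^2$, which is trivially true. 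Hence $g$ is strictly decreasing on $(2,\infty)$, and combined with your limit $g(t)\to e^{-1}$ as $t\to\infty$ (showing the constant $\tfrac{1}{2\pi e}$ is sharp as $p\to 1^+$ and never attained), the bound follows for every $p>1$. With this insertion your proposal is a complete and correct proof, and on the constant it delivers more than the paper's own.
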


 \textit{Remark.} Suppose $u \in H^1(\mathbb{R})$. Then there is the classical uncertainty principle in the form
$$ \|u x\|_ {L^2}\|u_x\|_ {L^2} \geq \frac{1}{2}\|u\|_{L^2}^2.$$
Combining our inequality with the Gagliardo-Nirenberg inequality
$$ \|u\|_{L^{p+1}}^{p+1} \leq G_p \| u\|_{L^2}^{\frac{p+3}{2}}\|u_x\|_{L^2}^{\frac{p-1}{2}}$$
yields a version of the uncertainty principle with different constants and an additional term sandwiched in the middle
$$ \|u x\|_ {L^2}\|u_x\|_ {L^2} \geq \frac{1}{G_p^{\frac{2}{p-1}}} \frac{\|u x\|_{L^2} \|u\|_{L^{p+1}}^{\frac{2p+2}{p-1}}}{\|u\|_{L^2}^{\frac{p+3}{p-1}}} \geq \frac{\sqrt{c_p}}{G_p^{\frac{2}{p-1}}}\|u\|_{L^2}^2.$$

\section{Proof of Theorem 1}
We start by giving a refined version of Tao's monotonicity formula, then
derive a functional inequality and conclude by showing the sublevel estimate for all solutions of the differential inequality. Throughout the paper
$$ I(u(t)) := \int_{\mathbb{R}}{\int_{\mathbb{R}}{u(t,x)^2 (x-y)^2 u(t,y)^2dxdy}}.$$

\subsection{Refined monotonicity formula.}
The statement is implicitely contained in Tao's proof.
\begin{lemma}[Refined monotonicity formula] Let $u(t,x)$ be a global-in-time Schwartz solution to the defocusing gKdV for some $p \geq \sqrt{3}$. Then
$$\partial_t\left\langle x \right\rangle_M  - \partial_t\left\langle x \right\rangle_E \gtrsim_p \frac{1}{E M^3} \left(\int_{\mathbb{R}}{|u(x)|^{p+1}dx}\right)^2.$$
\end{lemma}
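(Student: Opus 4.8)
The plan is to recover this refined inequality by revisiting Tao's computation of $\partial_t\langle x\rangle_M - \partial_t\langle x\rangle_E$ and tracking the nonlinear terms more carefully rather than merely establishing their sign. First I would write out the time derivatives of each center explicitly. Since $M$ and $E$ are conserved, we have $\partial_t\langle x\rangle_M = \frac{1}{M}\partial_t\int x u^2\,dx$ and similarly for the energy center; using the gKdV equation $\partial_t u = -\partial_{xxx}u + \mu\partial_x(|u|^{p-1}u)$ and integrating by parts (with the Schwartz decay justifying all boundary terms vanishing), the quantity $\partial_t\int x u^2\,dx$ produces a linear contribution proportional to $\int u_x^2\,dx$ and a nonlinear contribution proportional to $\int |u|^{p+1}\,dx$. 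The energy center similarly yields a linear piece and a nonlinear piece. The crucial observation is that Tao's proof shows the \emph{difference} is a positive quadratic-type form, and the goal here is to extract from that computation a term that is genuinely bounded below by $\bigl(\int|u|^{p+1}\bigr)^2$ up to the mass and energy normalizations.

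The key step is to isolate, inside Tao's expression for $\partial_t\langle x\rangle_M - \partial_t\langle x\rangle_E$, the algebraic structure that made the sign work for $p\geq\sqrt{3}$, and to show it is not merely nonnegative but quantitatively positive. Concretely, after integration by parts the difference should reduce to an integral whose integrand is a sum of squares plus a term of the form $(\text{const})\cdot|u|^{p+1}$ or a product of $\int|u|^{p+1}$ with another positive quantity. I would aim to show that the dominant lower bound comes from a Cauchy-Schwarz or rearrangement step applied to the nonlinear contribution: the factor $\bigl(\int|u|^{p+1}\,dx\bigr)^2$ and the normalization $\frac{1}{EM^3}$ strongly suggest that one bounds a single power of $\int|u|^{p+1}$ using the energy $E\gtrsim\frac{1}{p+1}\int|u|^{p+1}$, so that the inequality $\partial_t\langle x\rangle_M - \partial_t\langle x\rangle_E\gtrsim_p E^{-1}M^{-3}\bigl(\int|u|^{p+1}\bigr)^2$ is, morally, the statement that the difference controls $M^{-3}\int|u|^{p+1}$ times a further nonlinear factor. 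The precise powers of $M$ are fixed by scaling: one checks that both sides transform identically under the scaling symmetry $u\mapsto\lambda^{-2/(p-1)}u(t/\lambda^3,x/\lambda)$, which is a useful consistency check and also dictates why the constant depends only on $p$.

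The main obstacle I expect is the second step: extracting a clean lower bound of exactly the form $\bigl(\int|u|^{p+1}\bigr)^2$ from Tao's integrand. Tao's argument presumably completes a square and discards a nonnegative remainder to conclude strict positivity; here I cannot simply discard that remainder, since I need to retain enough of it to produce the quantitative bound. The delicate point is that the nonlinear term appearing linearly in the integrand must be upgraded to appear quadratically in the final bound, which forces me to pair it against another positive quantity already present (most naturally the mass density $u^2$ or the energy) and apply Cauchy-Schwarz in a way that does not lose the exponent. I would therefore reexamine whether the completed square in Tao's proof has the form $\int(a\,u_x + b\,|u|^{(p-1)/2}u)^2$ with cross terms, and whether the leftover positive part, after applying Cauchy-Schwarz against the mass, reproduces $(\int|u|^{p+1})^2/(EM^3)$; the constant $c_p$ and the role of the threshold $p\geq\sqrt{3}$ should emerge from requiring the discriminant-type condition on $a,b$ to leave a strictly positive coefficient. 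Verifying that the surviving coefficient is positive precisely when $p\geq\sqrt{3}$, and that the scaling dimensions match, is where the real work lies.
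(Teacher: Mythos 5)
Your overall strategy --- rerun Tao's computation and keep a quantitative piece of the positivity rather than discarding it --- is the right instinct, and your scaling consistency check is correct. But the proposal stops short of a proof at precisely the step you flag as ``where the real work lies,'' and your guess for how that step goes is off in two concrete ways. First, you assume the nonlinear term enters only linearly and must be ``upgraded'' to quadratic by Cauchy--Schwarz against the mass. In fact $\partial_t\left\langle x \right\rangle_E$ already contains $\int_{\mathbb{R}}{|u|^{2p}dx}$, so in Tao's normalized variables, defined by $a^2M=\int u_{xx}^2$, $b^2M=\int|u|^{2p}$, $aqM=\int u_x^2$, $brM=\int|u|^{p+1}$, $absM=p\int|u|^{p-1}u_x^2$ (with $0<q,r,s<1$), the quantity $EM(\partial_t\left\langle x \right\rangle_M-\partial_t\left\langle x \right\rangle_E)$ is exactly the quadratic form
$$\frac{3}{2}(1-q^2)a^2+\left(2s-\frac{p+3}{p+1}qr\right)ab+\frac{1}{2}\left(1-\frac{4p}{(p+1)^2}r^2\right)b^2,$$
and the target $\left(\int|u|^{p+1}dx\right)^2=b^2r^2M^2$ already sits inside the $b^2$ coefficient; the Cauchy--Schwarz you are looking for is encoded once and for all in the normalization $r\leq 1$.

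Second, the extraction does not come from a discriminant analysis of a completed square, and the threshold $p\geq\sqrt{3}$ plays no role in the size of the extracted term. The missing observation, which is the entire content of the paper's proof, is that Tao's positivity argument actually establishes the strictly stronger inequality
$$\frac{3}{2}(1-q^2)a^2+\left(2s-\frac{p+3}{p+1}qr\right)ab+\frac{1}{2}\left(1-r^2\right)b^2>0,$$
and this is the only place where $p\geq\sqrt{3}$ and the Gram-type constraint $1-q^2-r^2-s^2+2qrs\geq 0$ are used. Subtracting this proven inequality from the exact expression leaves the surplus
$$EM(\partial_t\left\langle x \right\rangle_M-\partial_t\left\langle x \right\rangle_E)\geq\left(\frac{1}{2}-\frac{2p}{(p+1)^2}\right)b^2r^2,$$
whose coefficient is positive for \emph{every} $p>1$ since $(p+1)^2>4p$; using $b^2r^2=M^{-2}\left(\int|u|^{p+1}dx\right)^2$ then yields the lemma. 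So the refinement is a one-line bookkeeping observation about an inequality Tao already proved --- the gap between the coefficient $\frac{1}{2}(1-\frac{4p}{(p+1)^2}r^2)$ that is needed and the coefficient $\frac{1}{2}(1-r^2)$ that is proved --- not a new Cauchy--Schwarz step or a new positivity analysis. Without identifying that Tao proves the stronger $(1-r^2)$-version, your plan has no mechanism to produce the bound, and as written it remains a program rather than a proof.
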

\begin{proof} Tao's proof is based on introducing
strictly positive quantities $a,b,q,r,s$ by solving
\begin{align*}
  a^2M &= \int_{\mathbb{R}}{u_{xx}^2dx} \qquad  \qquad b^2M = \int_{\mathbb{R}}{|u|^{2p}dx} \qquad \qquad aqM = \int_{\mathbb{R}}{u_x^2 dx} \\
brM &= \int_{\mathbb{R}}{|u|^{p+1}dx} \qquad \qquad absM = p\int_{\mathbb{R}}{|u|^{p-1}u_x^2 dx},
\end{align*}
where partial integration implies
$$ 0 < q,r,s < 1 \qquad \mbox{and} \qquad 1-q^2-r^2-s^2+2qrs \geq 0.$$
Then the proof can be finished algebraically by showing that for $p \geq \sqrt{3}$
$$EM(\partial_t\left\langle x \right\rangle_M  - \partial_t\left\langle x \right\rangle_E) = \frac{3}{2}(1-q^2)a^2 + \left(2s-\frac{p+3}{p+1}qr\right)ab + \frac{1}{2}\left(1-\frac{4p}{(p+1)^2}r^2\right)b^2 > 0.$$
However, he actually proves the stronger statement
$$\frac{3}{2}(1-q^2)a^2 + \left(2s-\frac{p+3}{p+1}qr\right)ab + \frac{1}{2}\left(1-r^2\right)b^2 > 0.$$
Hence
$$ EM(\partial_t\left\langle x \right\rangle_M  - \partial_t\left\langle x \right\rangle_E) \geq \left(\frac{1}{2} - \frac{2p}{(p+1)^2}\right)b^2 r^2 \gtrsim_p \frac{1}{M^2}\left(\int_{\mathbb{R}}{|u(x)|^{p+1}dx}\right)^2.$$
\end{proof}

\subsection{A differential inequality.} Repeated partial integration yields
\begin{align*}
 \qquad \partial_t \int_{\mathbb{R}}{\int_{\mathbb{R}}{u(x)^2u(y)^2(x-y)^2dxdy}}  =  4\int_{\mathbb{R}}{\int_{\mathbb{R}}{u(y)^2\left(\frac{p}{p+1}|u(x)|^{p+1} + \frac{3}{2}u_x(x)^2\right)\left(y-x\right)dx dy}}.
\end{align*}
 
By introducing the normalized centers of mass and energy,
$$\left\langle x \right\rangle_M := \frac{1}{M}\int_{\mathbb{R}}{xu^2dx}  \qquad \mbox{and} \qquad \left\langle x \right\rangle_E := \frac{1}{E}\int_{\mathbb{R}}{x\left(\frac{1}{2}u_x^2 + \frac{1}{p+1}|u|^{p+1}\right)dx},$$
we can rewrite the first derivative as
\begin{align*}
  (\diamondsuit) \qquad \partial_t\int_{\mathbb{R}}{\int_{\mathbb{R}}{u(x)^2u(y)^2(x-y)^2dxdy}} &= 12 E M\left(\left\langle x \right\rangle_M  -  \left\langle x \right\rangle_E \right) \\
 &+ \frac{4p-12}{p+1}\left(\left\langle x \right\rangle_M M \int_{\mathbb{R}}{|u|^{p+1}dx} - M \int_{\mathbb{R}}{|u|^{p+1}x dx}\right).
\end{align*} 
In the special case $p=3$, the monotonicity formula immediately implies convexity of $I(u(t))$. \\

The following simple argument will be used also in later proofs. If $I(u(t))$ is small, then there is a small interval containing a lot of the $L^2-$mass: for a fixed time $t$, let $J$ be the unique interval such that
$$ \int_{x < \inf J}{u^2 dx} = \int_{x > \sup J}{u^2 dx} = \frac{1}{4}\int_{\mathbb{R}}{u^2 dx},$$
then
    $$ \frac{M^2}{4}|J|^2 \leq \int_{x \in \mathbb{R} \setminus J}{\int_{x \in \mathbb{R} \setminus J}{ u(t,x)^2 u(t,y)^2(x-y)^2dxdy}} \leq I(u(t)),$$
which implies $|J| \leq 4\sqrt{I(u(t))}/M.$ Hence, with H\"older,
$$ \frac{M}{2} = \int_{J}{u^2 dx} \leq \left(\int_{J}{|u|^{p+1} dx}\right)^{\frac{2}{p+1}}|J|^{\frac{p-1}{p+1}}$$
and thus, as a consequence,
$$  \left(\int_{\mathbb{R}}{|u|^{p+1} dx}\right)^{2} \geq \left(\int_{J}{|u|^{p+1} dx}\right)^{2} \gtrsim M^{2p}I(u(t))^{1/2-p/2}.$$
The fundamental theorem of calculus and refined monotonicity (i.e. Lemma 1) give 
\begin{align*}
 (\left\langle x \right\rangle_M  - \left\langle x \right\rangle_E)\big|_{t} -  (\left\langle x \right\rangle_M  - \left\langle x \right\rangle_E)\big|_{t=0} 
&\gtrsim \int_{0}^{t}{\frac{1}{E M^3}\left(\int_{\mathbb{R}}{|u(z,x)|^{p+1}dx}\right)^2dz} \\
&\gtrsim \frac{M^{2p-3}}{E}\int_{0}^{t}{I(u(z))^{\frac{1-p}{2}}dz}.
\end{align*}

The remaining term on the right-hand side of $(\diamondsuit)$ can be easily controlled via
$$ \left|\left\langle x \right\rangle_M M \int_{\mathbb{R}}{|u|^{p+1}dx} - M \int_{\mathbb{R}}{|u|^{p+1}x dx}\right| \leq  \int_{\mathbb{R}}{\int_{\mathbb{R}}{u(x)^2|u(y)|^{p+1}|x-y|dxdy}},$$
which, using H\"older and
$$ \|u\|_{L^{\infty}} \lesssim \left(\int_{\mathbb{R}}{u^2dx}\right)^{\frac{1}{4}}\left(\int_{\mathbb{R}}{u_x^2dx}\right)^{\frac{1}{4}} \lesssim M^{1/4}E^{1/4}$$
can be bounded by
$$  \int_{\mathbb{R}}{\int_{\mathbb{R}}{u(x)^2|u(y)|^{p+1}|x-y|dxdy}} \lesssim M(ME)^{\frac{p-1}{4}}I(u(t))^{1/2}.$$

Altogether, this yields that the real function $f(t) := I(u(t))$ satisfies the differential inequality
$$ f'(t) \geq \alpha \int_{0}^{t}{f(z)^{\frac{1-p}{2}}dz} - \beta\sqrt{f(t)} - \gamma,$$
for positive constants $\alpha \sim M^{2p-2}$, $\beta \sim M(EM)^{\frac{p-1}{4}}$ and a constant $\gamma$ which encodes the initial difference between
the center of mass and the center of energy
$$ \gamma = E M (\left\langle x \right\rangle_M  - \left\langle x \right\rangle_E)\big|_{t=0}.$$

\subsection{Conclusion.} This section finishes the proof of Theorem 1 by showing that the derived differential inequality alone already implies the sublevel estimate.
\begin{lemma} Let $f \in C^1(\mathbb{R}, \mathbb{R}_+)$, $\alpha, \beta > 0$ and $\gamma \in \mathbb{R}$ arbitrary. Assume that $f(x) > \delta > 0$ and
$$ f'(x) \geq \alpha \int_{0}^{x}{f(y)^{\frac{1-p}{2}}dy} - \beta\sqrt{f(x)} - \gamma,$$
then 
$$ \mu\left( \left\{x>0: f(x) \leq z\right\}\right) \lesssim_{\alpha, \beta, \gamma, \delta} z^{\frac{p}{2}}.$$ 
\end{lemma}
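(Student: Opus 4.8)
The plan is to exploit the structural asymmetry of the inequality: the driving term $\alpha\int_0^x f(y)^{(1-p)/2}\,dy$ is monotonically increasing in $x$ and is large \emph{precisely} when $f$ has been small, since for $p>1$ the exponent $(1-p)/2$ is negative and hence on the sublevel set $S_z := \{x>0 : f(x)\leq z\}$ the integrand satisfies $f(y)^{(1-p)/2}\geq z^{(1-p)/2}$. First I would note that we may assume $z\geq\delta$, as otherwise $S_z=\emptyset$ and the claim is vacuous. Introduce the occupation time $A(x) := \mu(S_z\cap(0,x))$, which is continuous, nondecreasing, and vanishes at $0$, together with $F(x) := \int_0^x f(y)^{(1-p)/2}\,dy$. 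Discarding the nonnegative contribution of the integrand off $S_z$ gives the decisive elementary bound $F(x)\geq A(x)\,z^{(1-p)/2}$.

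Next I would fix the threshold $L := \tfrac{2}{\alpha}\left(\beta\sqrt z + |\gamma|\right)z^{(p-1)/2}$, chosen so that $A(x)\geq L$ forces the driving term to dominate the loss terms. If $\mu(S_z)\leq L$ we are already done, since using $z\geq\delta$ one checks termwise that $z^{(p-1)/2}\leq\delta^{-1/2}z^{p/2}$ and hence $L\lesssim_{\alpha,\beta,\gamma,\delta,p} z^{p/2}$. Otherwise, by continuity of $A$ there is a first time $x_0$ with $A(x_0)=L$; moreover $f(x_0)\leq z$, because $A$ increases only on $S_z$ and is strictly increasing just to the left of $x_0$. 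Monotonicity of $F$ then yields, for every $x\geq x_0$, the lower bound $F(x)\geq F(x_0)\geq L\,z^{(1-p)/2}=\tfrac{2}{\alpha}(\beta\sqrt z+|\gamma|)$. Substituting this into the differential inequality and using $\beta\sqrt{f(x)}\leq\beta\sqrt z$ on $S_z$ produces the uniform lower bound $f'(x)\geq\beta\sqrt z>0$ at \emph{every} $x\geq x_0$ with $f(x)\leq z$.

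The final step converts this positive derivative into a measure bound on $S_z\cap[x_0,\infty)$ via a no-return argument. Let $x_2' := \sup\{x\geq x_0 : f\leq z\text{ on }[x_0,x]\}$. I claim $f>z$ on $(x_2',\infty)$: if $f$ returned to the level $z$ at some first point $x_4>x_2'$, then $f(x_4)=z$ would force $f'(x_4)\leq 0$, contradicting $f'(x_4)\geq\beta\sqrt z>0$. Hence $S_z\cap[x_0,\infty)=[x_0,x_2']$ is a single interval on which $f$ is strictly increasing from $f(x_0)>\delta$ up to $z$, so integrating $f'\geq\beta\sqrt z$ gives $\mu(S_z\cap[x_0,\infty))=x_2'-x_0\leq (z-\delta)/(\beta\sqrt z)\leq\sqrt z/\beta$. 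Combining the two regimes,
$$ \mu(S_z)=A(x_0)+\mu\bigl(S_z\cap[x_0,\infty)\bigr)\leq L+\frac{\sqrt z}{\beta}, $$
and invoking $z\geq\delta$ once more (so that $\sqrt z\leq\delta^{(1-p)/2}z^{p/2}$) absorbs all lower powers of $z$ into $z^{p/2}$, completing the proof.

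I expect the main obstacle to be exactly this no-return/monotonicity argument. Pointwise positivity of $f'$ on $S_z$ alone does not suffice, since a priori $f$ could oscillate in and out of the sublevel region infinitely often, and the accumulated measure of such excursions need not be controlled by a single length $(z-\delta)/(\beta\sqrt z)$. The feature that rescues the estimate is that $F$ is nondecreasing, so the lower bound $f'\geq\beta\sqrt z$ persists for \emph{all} $x\geq x_0$ rather than only locally; once $f$ escapes above the level $z$ it can never descend again, collapsing the post-threshold part of $S_z$ into one interval.
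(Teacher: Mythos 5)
Your proof is correct and follows essentially the same strategy as the paper's: once the occupation time of the sublevel set reaches a threshold of size $\sim z^{p/2}$, the memory term forces the uniform bound $f'\gtrsim\sqrt{z}$ on the sublevel set from that point on, and a first-return argument (at a first return point $x_4$ with $f(x_4)=z$ one would need $f'(x_4)\leq 0$, contradicting the positive lower bound) confines the remaining sublevel portion to a single stretch of length $\lesssim\sqrt{z}/\beta$, which is absorbed into $z^{p/2}$ using $z\geq\delta$. Your write-up is in fact slightly more explicit than the paper's (a quantitative threshold $L$ in place of ``take $c$ sufficiently large,'' and the clean observation that $S_z\cap[x_0,\infty)$ collapses to one interval), but the underlying ideas are identical.
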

\begin{proof} Let us quickly describe the argument: the lower bound is comprised of one trivial component $- \beta\sqrt{f(x)} - \gamma$, which merely depends on the
actual value of $f(x)$ and one term with 'memory': whether the integral is large compared to the trivial component depends on whether or not the function has been small in the past. 
In particular, if the function has been small in the past for a long time, the integral will dominate the trivial component and force the function to grow. 
Fix a $z > \delta$, let $c$ to be a large positive constant, consider $I = \left\{x>0: f(x) \leq z\right\}$ and take $K$ sufficiently large such that
$$|[0,K] \cap I| = cz^{\frac{p}{2}}.$$
It remains to show that taking $c>0$ sufficiently large yields a contradiction. Let us take
a look at the derivative at $x = K$ (trivially, $K$ can be chosen such that $K \in I$)
 \begin{align*}
   f'(K) &\geq \alpha \int_{0}^{K}{f(y)^{\frac{1-p}{2}}dy} - \beta\sqrt{f(K)} - \gamma \geq \alpha \int_{I \cap [0,K]}^{}{f(y)^{\frac{1-p}{2}}dy} - \beta\sqrt{z} - \gamma \geq \alpha c \sqrt{z}  - \beta\sqrt{z} - \gamma
 \end{align*}
Since $f(x) > \delta > 0$, the statement we are trying to prove is trivially true for $z \leq \delta$ and we may assume $z \geq \delta$. Then, 
for any $c$ suffiently large depending on $\alpha, \beta, \gamma$, this implies that at 
$x = K$ the derivative is of order $f'(x) \gtrsim \sqrt{z}$. By the
same reasoning, the same holds true for all points in $(K, \infty) \cap I$. This growth  implies that $(\mathbb{R} \setminus I) \cap (K, K + C\sqrt{z}) \neq \emptyset,$
where the constant $C$ depends on $\alpha, \beta, \gamma$ but not $z$. We show now that $\sup I \leq K + C\sqrt{z}$. Suppose, this were not the case.
Since $(\mathbb{R} \setminus I) \cap (K, K + C\sqrt{z}) \neq \emptyset$, there would then be a smallest point $y^*$ with $f(y^*) = z$, where
the previous inequality implies $f'(y^*) > 0$ and this is a contradiction. Altogether, the final ingredient $f(x) > \delta > 0$ implies
$$ |I| \leq cz^{\frac{p}{2}} + C\sqrt{z} \lesssim_{\delta} (c+C)cz^{\frac{p}{2}},$$
where $c$ could be chosen depending only on $\alpha, \beta, \gamma$ and $C$ was finite.
\end{proof}
\textit{Remark.} Note that in applying this result to our case, the constants behave as $\alpha \sim M^{2p-2}$, $\beta \sim M(EM)^{\frac{p-1}{4}}$ and 
$$ \gamma = E M (\left\langle x \right\rangle_M  - \left\langle x \right\rangle_E)\big|_{t=0}.$$
For the constant $\delta > 0$, we have $\delta \gtrsim M^3 E^{-1}$ (which is proven further below), hence $\delta = \delta(\alpha, \beta)$, which is
why it is not mentioned in the formulation of the Theorem 1.

\section{Proof of Corollary 2}
This section first points out in which way the previous argument can be strenghtened to yield (very minor) additional information in a special case.
Its remainder is then devoted to the connections between the functional, energy and mass.

\subsection{Proof of Corollary 2.} The argument is very easy and uses nothing but the differential inequality.
\begin{lemma} Let $f \in C^1(\mathbb{R}, \mathbb{R}_+)$ and $\alpha, \beta > 0, \gamma \leq 0$. If
$$ f'(x) \geq \alpha \int_{0}^{x}{f(y)^{\frac{1-p}{2}}dy} - \beta\sqrt{f(x)} - \gamma$$
and
$$ f(0) < \left(\frac{\alpha}{\beta^2}\right)^{\frac{2}{p-1}},$$
then
$$ \inf_{x > 0}{f(x)} \geq \frac{f(0)}{4}.$$
\end{lemma}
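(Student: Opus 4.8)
The plan is to argue by contradiction with a first-hitting-time argument, using the ``memory'' integral to show that $f$ cannot descend all the way to $f(0)/4$ without being forced to turn around. Suppose the conclusion fails, so that $\inf_{x>0} f(x) < f(0)/4$. Since $f(0) > f(0)/4$ and $f$ is continuous, the first-hitting time $x^* := \inf\{x>0 : f(x)\le f(0)/4\}$ is finite and strictly positive, and by continuity $f(x^*)=f(0)/4$ while $f(y)>f(0)/4$ on $[0,x^*)$. As $f$ is approached from above at $x^*$, we have $f'(x^*)\le 0$, and the whole argument consists in contradicting this by showing $f'(x^*)>0$. Because $\gamma\le 0$ and the memory integral is nonnegative, the differential inequality evaluated at $x^*$ already gives
$$ f'(x^*) \ \ge\ \alpha \int_0^{x^*} f(y)^{\frac{1-p}{2}}\,dy \ -\ \beta\sqrt{f(x^*)}, $$
where $\beta\sqrt{f(x^*)}=\tfrac{1}{2}\beta\sqrt{f(0)}$ is explicit; everything therefore reduces to a sufficiently strong lower bound on the memory integral.

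The key step is this lower bound, and the right device is the substitution $g:=\sqrt{f}$. Dropping the same nonnegative terms gives $f'\ge -\beta\sqrt{f}$ everywhere, hence $g'=f'/(2\sqrt f)\ge -\beta/2$: the quantity $\sqrt f$ cannot decrease with slope steeper than $-\beta/2$. Let $x_0$ be the last time before $x^*$ at which $f=f(0)/2$; it exists by the intermediate value theorem since $f$ runs from a value $\ge f(0)/2$ down to $f(0)/4$. On $[x_0,x^*]$ the slope bound on $g$ yields simultaneously the pointwise control $g(y)\le g(x^*)+\tfrac{\beta}{2}(x^*-y)$ and the length bound $x^*-x_0 \ge \tfrac{2}{\beta}\bigl(\sqrt{f(0)/2}-\tfrac12\sqrt{f(0)}\bigr)$. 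Since $\tfrac{1-p}{2}<0$, inserting the pointwise bound into the integrand over $[x_0,x^*]$ and changing variables $w=g(x^*)+\tfrac{\beta}{2}(x^*-y)$ collapses the estimate to the clean form
$$ \int_0^{x^*} f(y)^{\frac{1-p}{2}}\,dy \ \ge\ \frac{2}{\beta}\int_{g_0}^{g_1} w^{1-p}\,dw, \qquad g_0=\tfrac{1}{2}\sqrt{f(0)},\quad g_1=\sqrt{2}\,g_0. $$

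I would then combine the two displays. Writing the elementary integral as $\int_{g_0}^{g_1} w^{1-p}\,dw = c_p'\,g_0^{2-p}$ with $c_p' := \int_1^{\sqrt2} w^{1-p}\,dw>0$, and using $g_0^{1-p}=2^{p-1}f(0)^{\frac{1-p}{2}}$, the desired inequality $f'(x^*)>0$ reduces to $2^p c_p'\,\alpha\, f(0)^{\frac{1-p}{2}} > \beta^2$. The hypothesis $f(0)<(\alpha/\beta^2)^{2/(p-1)}$ is precisely $\alpha f(0)^{\frac{1-p}{2}}>\beta^2$, so it suffices that the explicit constant $2^p c_p' = 2^p(2^{(2-p)/2}-1)/(2-p)$ be at least $1$, which a short check confirms throughout the range $p\ge\sqrt3$. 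This forces $f'(x^*)>0$, contradicting $f'(x^*)\le 0$ and establishing the claim.

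I expect the only genuine difficulty to be constant bookkeeping rather than any conceptual hurdle. A crude ``time to descend'' estimate (bounding the slope of $f$ directly) loses a factor $2^{1-p/2}>1$ for $p<2$ and falls just short of the sharp hypothesis constant; passing instead to the slope bound $g'\ge-\beta/2$ for $g=\sqrt f$ is exactly what makes the integral estimate tight enough to be absorbed by the hypothesis while leaving the clean factor $\tfrac14$ in the conclusion.
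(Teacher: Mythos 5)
Your proof is correct, and while it runs on the same engine as the paper's argument --- the descent-speed bound $f' \ge -\beta\sqrt{f}$, i.e.\ $(\sqrt{f})' \ge -\beta/2$, played off against the accumulating memory integral --- the implementation is genuinely different. The paper first asserts, without detailed justification, that one may restrict to functions that decrease monotonically until a global minimum; it then compares $f$ with the explicit solution $g(x)=\tfrac14\bigl(2\sqrt{f(0)}-\beta x\bigr)^2$ of $g'=-\beta\sqrt{g}$, $g(0)=f(0)$, shows $f'>0$ at $x=\sqrt{f(0)}/\beta$ by bounding the memory term below by $x\,f(0)^{\frac{1-p}{2}}$ (using $f\le f(0)$ along the descent), and concludes that the minimum must occur before that time, where $g\ge f(0)/4$. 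Your first-hitting-time contradiction at the level $f(0)/4$ removes the monotonicity reduction altogether --- the one soft spot in the paper's write-up --- at the cost of a constant: restricting the memory integral to the window between the levels $f(0)/2$ and $f(0)/4$ leads to the requirement $2^p c_p' \ge 1$ with $c_p'=\int_1^{\sqrt2}w^{1-p}\,dw$. This does hold on the paper's range, as you claim: for $\sqrt3\le p\le 2$ the bound $w^{1-p}\ge w^{-1}$ on $[1,\sqrt2]$ gives $2^p c_p' \ge 2^{\sqrt3}\ln\sqrt2 > 1.15$, and for $p\ge 2$ the bound $w^{1-p}\ge 2^{(1-p)/2}$ gives $2^p c_p' \ge (\sqrt2-1)\,2^{(p+1)/2} > 1.17$; but it fails near $p=1$ (at $p=1.2$ one gets $2^pc_p'\approx 0.92$), whereas the paper's comparison argument is lossless in the constant and valid for every $p>1$. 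You can recover the full range at no cost: start the window at the last $x_0\le x^*$ with $f(x_0)=f(0)$ (taking $x_0=0$ if $f<f(0)$ on $(0,x^*]$); the same change of variables then yields $\int_0^{x^*} f^{\frac{1-p}{2}}\,dy \ \ge\ \tfrac{2}{\beta}\int_{g_0}^{2g_0} w^{1-p}\,dw$ with $g_0=\tfrac12\sqrt{f(0)}$, and the needed constant becomes $2^p\int_1^2 w^{1-p}\,dw = \tfrac{4-2^p}{2-p}\ \ge\ 2$ for all $p>1$, which follows from convexity of $2^p$ against the chord $2p$ through $p=1$ and $p=2$.
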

\begin{proof} It follows from the structure of the inequality that we can restrict ourselves to functions
being monotonically increasing until they reach a global minimum.
The function
$$ g(x) = \frac{1}{4}(2\sqrt{f(0)} - \beta x)^2$$
satisfies $g(0) = f(0)$ and $ g'(x) = -\beta \sqrt{g(x)}$. At $x = \sqrt{f(0)}/\beta$, we have
from monotonicity and the initial bound on $f(0)$ that
$$ f'(x) \geq \alpha \int_{0}^{x}{f(y)^{\frac{1-p}{2}}dy} - \beta\sqrt{f(x)} \geq \frac{\alpha}{\beta}\sqrt{f(0)}f(0)^{\frac{1-p}{2}} - \beta\sqrt{f(0)} > 0$$
implying that the minimum is assumed before $x = \sqrt{f(0)}/\beta$. However, 
$$ g\left(\frac{\sqrt{f(0)}}{\beta}\right) =  \frac{1}{4}(2\sqrt{f(0)} - \beta \frac{\sqrt{f(0)}}{\beta})^2 = \frac{f(0)}{4}.$$
\end{proof}

\subsection{Reducing Proposition 1 to Proposition 2.} The product structure of the inequalities implies that it is sufficient to prove the two endpoints
$\alpha = 3$ and $\alpha = 4p/(p-1)$. The endpoint $\alpha = 3$ follows quickly from Sobolev embedding. All further considerations are for general functions 
$u \in H^1(\mathbb{R})$, where we use $M$ and $E$ to denote their mass and energy, respectively. There are no time-dependent elements in the arguments nor
does the gKdV equation play any role. 

\begin{lemma} For any $u \in H^1(\mathbb{R})$
 $$ I(u) \gtrsim \frac{M^{3}}{E^{1}}.$$
\end{lemma}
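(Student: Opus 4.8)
The plan is to reduce the claimed inequality to the classical Heisenberg uncertainty principle. The first step is the purely algebraic observation that the interaction functional is, up to constants, the mass times the centered second moment of the density $u^2$. Expanding $(x-y)^2 = x^2 - 2xy + y^2$ and integrating each term separately gives
$$ I(u) = 2M\int_{\mathbb{R}} x^2 u^2\, dx - 2\left(\int_{\mathbb{R}} x u^2\, dx\right)^2 = 2M \int_{\mathbb{R}} (x - \langle x\rangle_M)^2 u^2\, dx,$$
so that $I(u)$ is determined entirely by the variance of $u^2$ about its center of mass. If this second moment is infinite then $I(u) = \infty$ and there is nothing to prove, so I may assume it finite; likewise, if $E = \infty$ the right-hand side $M^3/E$ vanishes and the claim is trivial.

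Second, I would discard the potential part of the energy and retain only the kinetic term, using $E \geq \tfrac{1}{2}\int_{\mathbb{R}} u_x^2\, dx$. This is exactly the point that makes the target inequality scale invariant: both $I(u)$ and $\int u_x^2\, dx$ are finite for $u \in H^1(\mathbb{R})$ of finite variance, and their product carries the same homogeneity as $M^3$ under both the amplitude rescaling $u \mapsto Au$ and the length rescaling $u(\cdot) \mapsto u(\cdot/L)$. It therefore suffices to establish the cleaner, fully scale-invariant statement
$$ I(u) \int_{\mathbb{R}} u_x^2\, dx \gtrsim M^3.$$

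Third, I would invoke the uncertainty principle in the translated form $\|(x - \langle x\rangle_M) u\|_{L^2}\, \|u_x\|_{L^2} \geq \tfrac{1}{2}\|u\|_{L^2}^2$, which is precisely the inequality recorded in the Remark following Proposition 2. It follows by writing $u u_x = \tfrac{1}{2}(u^2)_x$, integrating $\int_{\mathbb{R}} (x - \langle x\rangle_M) u u_x\, dx = -\tfrac{1}{2}M$ by parts, and applying Cauchy--Schwarz. Squaring and combining with the identity from the first step, namely $\int (x - \langle x\rangle_M)^2 u^2\, dx = I(u)/(2M)$, yields
$$ \frac{I(u)}{2M}\int_{\mathbb{R}} u_x^2\, dx \geq \frac{1}{4}M^2, \qquad \text{hence} \qquad I(u) \int_{\mathbb{R}} u_x^2\, dx \geq \frac{1}{2}M^3.$$
Together with $\int u_x^2\, dx \leq 2E$ this gives $I(u) \geq M^3/(4E)$, as desired.

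I do not anticipate any genuine obstacle: once $I(u)$ is recognized as a centered second moment, the estimate is a one-line consequence of the uncertainty principle. The only points requiring mild care are the vanishing of the boundary terms in the integration by parts (automatic for Schwartz functions and justified by density for general $u \in H^1$ with $xu \in L^2$) and the trivial disposal of the degenerate cases where either the variance or the energy is infinite.
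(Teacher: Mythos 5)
Your proof is correct, but it takes a genuinely different route from the paper. The paper does not expand the kernel $(x-y)^2$ at all: instead it combines the Sobolev bound $\|u\|_{L^\infty}^2 \lesssim M^{1/2}E^{1/2}$ with the mass-concentration argument used earlier in the proof of Theorem 1, namely that there is an interval $J$ with $|J| \lesssim \sqrt{I(u)}/M$ carrying half the $L^2$-mass, whence $\|u\|_{L^\infty}^2 \geq \frac{1}{|J|}\int_J u^2 \gtrsim M^2/\sqrt{I(u)}$; comparing the two bounds gives the claim. You instead observe the exact identity $I(u) = 2M\int_{\mathbb{R}}(x-\left\langle x \right\rangle_M)^2u^2\,dx$ (which is the same algebraic fact the paper exploits for Kwon--Shao's theorem and, in the symmetric case, after rearrangement in the proof of Proposition 1) and then apply the translated Heisenberg inequality $\|(x-\left\langle x \right\rangle_M)u\|_{L^2}\|u_x\|_{L^2} \geq \frac{1}{2}M$ recorded in the paper's Remark, together with $\int u_x^2 \leq 2E$ (valid here since the defocusing potential term is nonnegative; note the paper itself remarks that only $\|u_x\|_{L^2}^2$ is needed). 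Your computations check out, including the expansion of $(x-y)^2$ and the integration by parts, and the caveats you flag (boundary terms, degenerate cases) are handled adequately. What each approach buys: yours is sharper and more self-contained, yielding the explicit constant $I(u) \geq M^3/(4E)$ and exposing the lemma as the uncertainty principle in disguise, whereas the paper's argument deliberately reuses the interval-$J$ concentration mechanism that is the workhorse of the whole paper (it also drives the $L^{p+1}$ lower bound in the differential inequality), so the redundancy there is a structural choice rather than a necessity.
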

\begin{proof}
By Sobolev embedding
$$ \|u\|^2_{L^{\infty}} \leq \left(\int_{\mathbb{R}}{u^2dx}\right)^{\frac{1}{2}}\left(\int_{\mathbb{R}}{u_x^2dx}\right)^{\frac{1}{2}} \lesssim M^{1/2}E^{1/2}.$$
At the same time, reusing an argument employed earlier, there is an interval $J$ of length  $|J| \lesssim \sqrt{I(u)}/M$ such that $J$ contains half of the $L^2-$mass of $u$. Therefore
$$ \|u\|^2_{L^{\infty}} \geq \|u\|^2_{L^{\infty}(J)} \geq \frac{1}{|J|}\int_{J}{u(x)^2dx} \gtrsim \frac{M}{|J|} \gtrsim \frac{M^2}{\sqrt{I(u)}}.$$
Combining these two inequalities gives the result.
\end{proof}

The proof of the second endpoint uses symmetric decreasing rearrangement to gain an additional symmetry, which then yields an algebraic simplification of the functional.
The following statement will come as no surprise at all, it can certainly be founded in the literature in more general form.

\begin{lemma}
 The functional $I$ is decreasing under symmetrically decreasing rearrangement.
\end{lemma}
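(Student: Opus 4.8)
The plan is to sidestep the Riesz rearrangement inequality entirely, since it points the wrong way here: the kernel $(x-y)^2$ is \emph{increasing} in $|x-y|$ (and not integrable), so rearranging toward a symmetric decreasing profile should \emph{decrease} the pairing, not increase it. Instead I would reduce the statement to a one–dimensional variance estimate. Expanding $(x-y)^2 = x^2 - 2xy + y^2$ and writing $f = u^2$, $M = \int_{\mathbb{R}} f\,dx$, one obtains the static identity
$$ I(u) = 2M\int_{\mathbb{R}} x^2 f\,dx - 2\Big(\int_{\mathbb{R}} x f\,dx\Big)^2 = 2M^2\,\mathrm{Var}(f/M), $$
so that $I(u)$ is, up to the factor $2M^2$, exactly the variance of the probability density $f/M$. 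Because symmetric decreasing rearrangement preserves the total mass $M$ and commutes with the increasing map $t\mapsto t^2$, we have $(u^*)^2 = (u^2)^* =: f^*$ with the same mass, so it suffices to prove the single fact that rearrangement does not increase the variance, $\mathrm{Var}(f^*/M) \le \mathrm{Var}(f/M)$.

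For the variance inequality I would exploit translation invariance together with the characterization of the variance as the minimal second moment about a point. Since $f^*$ is symmetric about the origin, its barycenter is $0$ and $\mathrm{Var}(f^*/M) = \frac1M\int_{\mathbb{R}} x^2 f^*\,dx$; writing $\bar x = \frac1M\int_{\mathbb{R}} x f\,dx$ for the barycenter of $f$, the claim becomes the moment inequality
$$ \int_{\mathbb{R}} x^2 f^*(x)\,dx \le \int_{\mathbb{R}} (x-\bar x)^2 f(x)\,dx. $$
To prove this I would use the layer-cake decomposition $f = \int_0^\infty \mathbf{1}_{A_t}\,dt$ with $A_t = \{f>t\}$, so that correspondingly $f^* = \int_0^\infty \mathbf{1}_{A_t^*}\,dt$, where $A_t^* = (-|A_t|/2,\,|A_t|/2)$ is the centered interval of the same measure. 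Both sides then split over the levels, and it is enough to show, for each set $A = A_t$ of measure $m$, that $\int_{A^*} x^2\,dx \le \int_A (x-\bar x)^2\,dx$. After the substitution $\int_A (x-\bar x)^2\,dx = \int_{A-\bar x} x^2\,dx$, this is immediate from the bathtub principle: the sublevel sets of the weight $x^2$ are precisely the centered intervals, hence $\min_{|B|=m}\int_B x^2\,dx = \int_{A^*} x^2\,dx$, and this lower bound holds for every translate $A-\bar x$. Integrating in $t$ gives the moment inequality, whence $I(u^*)\le I(u)$.

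The hard part is conceptual rather than computational: one must use a \emph{single} center $\bar x$ simultaneously for all level sets $A_t$, which is exactly why I package everything through the variance rather than through a per-set inequality with a freely chosen center. This is automatic once one reads the variance as the minimum over centers of the second moment of $f/M$, which then dominates the second moment of the already-centered $f^*/M$. The only remaining items are routine measure-theoretic bookkeeping — finiteness of the integrals and the legitimacy of the layer-cake interchange — which are harmless for Schwartz (or $H^1$) functions and require nothing beyond $I(u) < \infty$.
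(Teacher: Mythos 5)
Your proof is correct, but it takes a genuinely different route from the paper. The paper keeps the two-variable structure: it applies a layer-cake decomposition in \emph{both} variables and reduces the lemma to the claim that, among sets $A,B$ of prescribed measures, $\int_A\int_B(x-y)^2\,dx\,dy$ is minimized by intervals with a common midpoint, which it establishes by a sliding argument (translating the pieces of $A$ and $B$ lying on one side of a gap toward each other strictly decreases the cross terms). You instead exploit the algebraic specialness of the quadratic kernel: expanding $(x-y)^2$ gives the exact identity $I(u)=2M\int_{\mathbb{R}}x^2u^2\,dx-2\bigl(\int_{\mathbb{R}}xu^2\,dx\bigr)^2=2M^2\,\mathrm{Var}(u^2/M)$, collapsing the problem to one variable, after which a single layer-cake plus the bathtub principle (centered intervals minimize $\int_B x^2\,dx$ at fixed measure) yields $\int_{\mathbb{R}}x^2f^*\,dx\le\int_{\mathbb{R}}(x-\bar x)^2f\,dx$; your device of using the one common center $\bar x$, justified by reading the variance as a minimal second moment, is exactly the right way to make all level sets cooperate. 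Comparing the two: your argument is arguably tighter, since the paper's sliding argument is a variational sketch that tacitly treats the infimum as attained by a ``hypothetical counterexample'' pair and only shows such a pair can be strictly improved, whereas your bathtub step is a direct inequality valid for every set with no attainment or limiting issues; moreover your identity immediately yields the simplification $I(u)=2\bigl(\int u^2x^2\,dx\bigr)\bigl(\int u^2\,dx\bigr)$ that the paper records separately for symmetric functions. What the paper's route buys is generality: the per-level-set argument extends in principle to any kernel $\eta(x-y)$ increasing in $|x-y|$, while the variance identity is special to the quadratic kernel --- which is, however, precisely the kernel at hand. Your side remarks are also sound: Riesz rearrangement indeed points the wrong way for an increasing kernel, $(u^*)^2=(u^2)^*$ holds because rearrangement commutes with increasing maps vanishing at $0$, the Tonelli interchanges are harmless for nonnegative integrands, and when $I(u)=\infty$ the statement is vacuous, so finiteness of the moments is not a genuine issue.
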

\begin{proof}
We use
the familiar layer-cake decomposition
$$ \int_{\mathbb{R}}{\int_{\mathbb{R}}{u(t,x)^2 u(t,y)^2(x-y)^2dxdy}} = \int_{0}^{\infty}{\int_{0}^{\infty}{r s \int_{\left\{u(x)^2=r\right\} \times \left\{u(y)^2=s\right\}}{(x-y)^2d\mathcal{H}^2}dr}ds},$$
where $\mathcal{H}^2$ is the $2-$dimensional Hausdorff measure. The statement would then follow if it were the case that for fixed positive $a,b > 0$ and
subsets $A, B \subset \mathbb{R}$ of corresponding size 
$$ \inf_{|A| = a, |B| = b} \int_{A}{\int_{B}{(x-y)^2 dx} dy}$$
is assumed precisely when $A, B$ are intervals with the same midpoint (potentially ignoring Lebesgue null sets in the process). Let $A, B$ be
hypothetical counterexamples, then there exist constants $c_1, c_2$ such that a neighbourhood of $c_1$ is not contained in $A$ and both
$A_1 := A \cap \left\{x: x > c_1\right\}$ and $A_2 := A \cap \left\{x: x< c_1\right\}$ are nonempty and likewise for $c_2$ and $B$. Let us then replace
$A_1$ and $B_1$ by $A_1 + \varepsilon$ and $B_1 + \varepsilon$ for sufficiently small $\varepsilon$ such that no overlap occurs. Then the
integration between $A_1$ and $B_1$ as well as between $A_2$ and $B_2$ remains unchanged while it decreases between $A_1$ and $B_2$ as well
as $A_2$ and $B_1$. This shows that the infimum can only be assumed a pair of intervals (up to Lebesgue null sets) and an explicit calculation
yields the midpoint property.
\end{proof}

For symmetric functions, the functional simplifies to
$$ \int_{\mathbb{R}}{\int_{\mathbb{R}}{u(x)^2(x-y)^2u(y)^2dxdy}} = 2\left(\int_{\mathbb{R}}{u(x)^2 x^2 dx}\right)\left(\int_{\mathbb{R}}{u(x)^2dx}\right).$$
Assuming a lower bound of the type $I \gtrsim M^{\alpha}E^{\beta}$, we may use the fact that the gKdV scaling acts nicely on $M$ and $E$ to derive
the necessary condition
$$ 4 - \frac{8}{p-1} = \alpha\left(1-\frac{4}{p-1}\right) + \beta\left(3-\frac{4p}{p-1}\right).$$
A standard scaling $u(\cdot) \rightarrow a u(b \cdot)$ with $a,b > 0$ implies $2\alpha + (p+1)\beta \leq 4$ and
$2 \leq \alpha + \beta \leq 4$. These scaling considerations have the endpoint $(\alpha, \beta) = (3,-1)$, which is implied by the previous Lemma (where we actually didn't need the full energy but
only $\|u_x\|_{L^2}^2$). The other endpoint is simply Proposition 2.

\subsection{Proof of Proposition 2.} Here we give a proof of Proposition 2. The argument first shows
$$\forall p>1~ \exists c_p>0~ \forall u \in L^2(\mathbb{R}) \qquad \left(\int_{\mathbb{R}}{u^2x^2 dx}\right)\left(\int_{\mathbb{R}}{ |u|^{p+1}dx}\right)^{\frac{4}{p-1}} \geq c_p\left(\int_{\mathbb{R}}{u^2dx}\right)^{\frac{3p+1}{p-1}}.$$
It is based on using invariance under scaling and dilations and a characterization of compactness in $L^2$. It implies the existence of a minimizer but no numerical bounds on $c_p$.
The second step, based on an observation of Soonsik Kwon, is that the Euler-Lagrange functional has a closed form solution.
\begin{proof} It suffices to consider functions $u$ invariant under symmetric decreasing rearrangement.
We pick a minimizing sequence $u_n \in H^1(\mathbb{R})$ of
$$ \frac{\left(\int_{\mathbb{R}}{u^2x^2 dx}\right)^{\frac{p-1}{4}} \int_{\mathbb{R}}{ u^{p+1}dx}}{\left(\int_{\mathbb{R}}{u^2dx}\right)^{\frac{3p+1}{p-1}}}$$
and use invariance under scaling and dilations to prescribe $u(0) = 1$ and $\int{u^2 x^2 } = 1$. Trivially, the sequence is then bounded
in $L^2$ by
$$ \int_{\mathbb{R}}{u^2 dx} \leq 2 + \int_{|x| \geq 1}{u^2 x^2 dx} = 3.$$
We use an observation that is usually ascribed to either Feichtinger \cite{fei} or Pego \cite{pego}. A bounded set $K \subset L^2$ is compact in $L^2$ if there exists a function
$C:\mathbb{R}_+ \rightarrow \mathbb{R}_+$ giving uniform control on decay
$$ \forall f \in K \qquad \int_{|x| > C(\varepsilon)}{|f(x)|^2 dx} + \int_{|\xi| > C(\varepsilon)}{|\hat{f}(\xi)|^2 d\xi} < \varepsilon.$$
The first part is easy and follows from the normalization. The symmetry of our functions implies that their Fourier transform is real-valued and thus
\begin{align*}
 \left|\hat{f}(\xi)\right| &= \left| \int_{-\infty}^{\infty}{f(x)\cos{\xi x}dx}\right| \lesssim \frac{1}{|\xi|},
\end{align*}
due to the fact that the monotonicity gives rise to an alternating series. Hence the minimizing sequence is compact and there exists
a minimizer. We fix 
$$ \int_{\mathbb{R}}{u^2 x^2 dx} = 1 = \int_{\mathbb{R}}{u^2}$$
and want to minimize
$$ \int_{\mathbb{R}}{|u|^{p+1} dx}$$
under these constraints. The Lagrange multiplier theorem implies that
$$ u^p = \lambda_1 x^2 u + \lambda_2 u$$
for some constants $\lambda_1, \lambda_2$ from which the statement follows.
\end{proof}


\begin{thebibliography}{}

\bibitem{col} J. Colliander, M. Grillakis and N. Tzirakis, Improved interaction Morawetz inequalities for the cubic nonlinear Schrödinger equation on $\mathbb{R}^2$. Int. Math. Res. Not. IMRN 2007, no. 23, Art. ID rnm090, 30 pp. 

\bibitem{deb} A. de Bouard and Y. Martel, Non existence of $L^2$-compact solutions of the Kadomtsev-Petviashvili II equation, Math. Ann. \textbf{328} (2004), 525--544.

\bibitem{dob} B. Dodson, Global well-posedness for the defocusing, quintic nonlinear Schrödinger equation in one dimension for low regularity data. Int. Math. Res. Not. IMRN 2012, no. 4, 870--893. 

\bibitem{dob2} B. Dodson, Global well-posedness and scattering for the defocusing, mass - critical generalized KdV equation, arXiv:1304.8025

\bibitem{fei} H. Feichtinger, Compactness in translation invariant Banach spaces of distributions and compact multipliers. J. Math. Anal. Appl. 102 (1984), no. 2, 289--327. 

\bibitem{gv} J. Ginibre and G. Velo, Quadratic Morawetz inequalities and asymptotic completeness in the energy space for nonlinear Schrödinger and Hartree equations. Quart. Appl. Math. 68 (2010), no. 1, 113--134. 

\bibitem{ki} R. Killip, S. Kwon, S. Shao and M. Visan, On the mass-critical generalized KdV equation. Discrete Contin. Dyn. Syst. \textbf{32} (2012), no. 1, 191--221. 

\bibitem{ks} S. Kwon and S. Shao, Nonexistence of soliton-like solutions for defocusing generalized KdV equations, arXiv:1205.0849

\bibitem{lau} C. Laurent and Y. Martel, Smoothness and exponential decay for $L^2-$compact solutions of the generalized Korteweg-de Vries equation, Comm. Partial Diff. Eq. \textbf{29} (2004), 157--171.

\bibitem{lind} H. Lindblad and T. Tao, Asymptotic decay for a one-dimensional nonlinear wave equation. Anal. PDE 5 (2012), no. 2, 411--422. 

\bibitem{mm} Y. Martel and F. Merle, A Liouville theorem for the critical generalized Korteweg-de Vries equation. J. Math. Pures Appl. (9) 79 (2000), no. 4, 339--425. 

\bibitem{mor} C. Morawetz,  The decay of solutions of the exterior initial-boundary value problem for the wave equation. Comm. Pure Appl. Math., Vol. 14, (1961), p. 561--568.

\bibitem{pego} R. Pego, Compactness in $L^{2}$ and the Fourier transform, Proc. Amer. Math. Soc. \textbf{95}, 2 (1985), 252--254. 

\bibitem{pla} F. Planchon and L. Vega, Bilinear virial identities and applications. Ann. Sci. Ec. Norm. Super. (4) 42 (2009), no. 2, 261--290. 

\bibitem{tao} T. Tao, Two remarks on the generalised Korteweg-de Vries equation. Discrete Contin. Dyn. Syst. \textbf{18} (2007), no. 1, 1--14. 

\bibitem{tao2} T. Tao, Nonlinear Dispersive Equations -- Local and Global Analysis, CBMS Regional Conference Series in Mathematics \textbf{106}, 2006.

\end{thebibliography}
\end{document}